\newcommand{\ord}{\mathrm{ord}}
\theoremstyle{plain}
\numberwithin{equation}{section}
\newtheorem{thm}{Theorem}[section]
\newtheorem{theorem}[thm]{Theorem}
\newtheorem{lemma}[thm]{Lemma}
\newtheorem{corollary}[thm]{Corollary}
\newtheorem{conjecture}[thm]{Conjecture}
\theoremstyle{remark}
\newtheorem{remark}{Remark}
\begin{document}

\markboth{L\'aszl\'o Szalay and Volker Ziegler}
{S-Diophantine quadruples with $\mathbf{S=\{2,q\}}$}

\title{S-Diophantine quadruples with $\mathbf{S=\{2,q\}}$}

\author{L\'aszl\'o Szalay}
\address{Institute of Mathematics\\
           Faculty of Forestry\\
           University of West Hungary\\
           H-9400 Sopron, Ady E. \'ut 5.\\
           Hungary}
           \email{laszalay@emk.nyme.hu}


\author{Volker Ziegler}
\address{Johann Radon Institute for Computational and Applied Mathematics (RICAM)\\
Austrian Academy of Sciences\\
Altenbergerstr. 69\\
A-4040 Linz, Austria}
\email{volker.ziegler\char'100ricam.oeaw.ac.at}


\begin{abstract}
Let $S$ denote a set of primes and let $a_1,\ldots,a_m$ be positive distinct
integers. We call the $m$-tuple $(a_1,\ldots,a_m)$ an $S$-Diophantine tuple if  $a_ia_j+1=s_{i,j}$ are $S$-units for all $i\not=j$. In this paper, we
show that no $S$-Diophantine quadruple (i.e.~$m=4$) exists if $S=\{2,q\}$ with $q\equiv 3\; (\bmod\, 4)$ or $q<10^9$. For two arbitrary primes $p,q<10^5$ we 
gain the same result.
\end{abstract}

\subjclass[2010]{11D61,11D45}
\keywords{Diophantine equations; $S$-unit equations; $S$-Diophantine quadruples.}

\maketitle

\section{Introduction}

Gy\H{o}ry, S\'ark\"ozy and Stewart~\cite{Gyory:1996} considered products of the form
\[\xi=\prod_{a\in A, b\in B}(ab+1),\]
where $A$ and $B$ are given sets of positive integers, and found lower bounds for the number of prime factors of $\xi$ in terms of $|A|$ and $|B|$. They also studied the 
specific case $A=B$ with 
$|A|=3$ and conjectured that the largest prime factor of
$(ab+1)(ac+1)(bc+1)$
tends to infinity as $\max\{a,b,c\}\rightarrow \infty$. This conjecture was finally proved by Corvaja and Zannier~\cite{Corvaja:2003}, and independently by 
Hern{\'a}ndez and Luca~\cite{Hernandez:2003}. Since both papers essentially depend on the Subspace Theorem, their results are ineffective, i.e.~the proofs 
provide no effective lower bound for the largest prime factor in term of $\max\{a,b,c\}$. The situation changes when 
$|A|=4$ since an effective result is known due to Stewart and Tijdeman~\cite{Stewart:1997}.

In this paper, we reverse the problem: if the number $k$ of prime divisors is given, find an upper bound for the cardinality of $A$ such that the 
number of prime divisors of 
\[\prod_{a,b\in A}(ab+1)\]
does not exceed $k$.

We introduce the following notations. Let $S$ denote a finite, fixed set of primes. We call an $m$-tuple $(a_1,\ldots,a_m)$
with positive and pairwise distinct integers $a_i$ ($1\leq i\leq m$) an $S$-Diophantine $m$-tuple, if the integers 
$$a_ia_j+1=s_{i,j}$$ 
are $S$-units for all 
$1\leq i <j\leq n$. Note that the classical Diophantine $m$-tuples contain squares instead of $S$-units, i.e.
\begin{equation}\label{Eq:Dioph} a_ia_j+1=\square_{i,j}, \end{equation}
which explains the denomination $S$-Diophantine tuple. Several variants of the classical Diophantine 
$m$-tuples have already been considered. For example, Bugeaud and Dujella~\cite{Bugeaud:2003} examined $m$-tuples, where $\square_{i,j}$ in~\eqref{Eq:Dioph} 
are replaced by $k$-th powers, Dujella and 
Fuchs~\cite{Dujella:2004c} investigated a polynomial version. Later Fuchs, Luca and Szalay~\cite{Fuchs:2008,Luca:2008} replaced $\square_{i,j}$ by terms of 
given 
binary recurrence sequences (cf.~\cite{Fuchs:2008}), and in particular the Fibonacci sequence (cf.~\cite{Luca:2008}). For a general survey 
we recommend Dujella's web page on Diophantine tuples~\cite{Dujella:HP}. So the $S$-Diophantine tuples can be considered as a 
further variant of the classical problem. 

In case of $|S|=2$ the authors recently showed that under some technical restrictions no 
$\{p,q\}$-Diophantine quadruple exists if $C(\xi)<p<q<p^\xi$ holds for any $\xi>1$ and for some explicitly computable constant $C(\xi)$ 
(see~\cite{Szalay:2013}). 
They also proved that no $\{p,q\}$-Diophantine quadruple exists if $p\equiv q\equiv 3\;(\bmod \,4)$ (see~\cite{Szalay:2013a}). By the results above, together 
with the experiences of a computer verification, the following 
conjecture is well-founded (see again~\cite{Szalay:2013,Szalay:2013a}).

\begin{conjecture}\label{Con:quadruples}
Let $p$ and $q$ be distinct primes. Then no $\{p,q\}$-Diophantine quadruple exists.
\end{conjecture}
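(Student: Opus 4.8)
The plan is to treat Conjecture~\ref{Con:quadruples} as a system of six simultaneous $S$-unit equations and to combine the multiplicative identities tying them together with congruence obstructions and Baker-type estimates. Order the quadruple as $a_1<a_2<a_3<a_4$ and write $a_ia_j+1=s_{i,j}=p^{x_{i,j}}q^{y_{i,j}}$ for each pair. The first step is to record the algebraic constraints any such tuple must satisfy. Since $a_ia_j=s_{i,j}-1$, the product $a_1a_2a_3a_4$ can be formed in three ways, giving
\begin{equation}\label{Eq:prod}
(s_{1,2}-1)(s_{3,4}-1)=(s_{1,3}-1)(s_{2,4}-1)=(s_{1,4}-1)(s_{2,3}-1),
\end{equation}
and, after expanding the outer products, the difference identities
\begin{equation}\label{Eq:diff}
s_{i,j}s_{k,l}-s_{i,k}s_{j,l}=(a_i-a_l)(a_j-a_k)
\end{equation}
for $\{i,j,k,l\}=\{1,2,3,4\}$. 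Moreover each $s_{i,j}\equiv 1\pmod{a_i}$ and $\pmod{a_j}$, so the three exponent vectors $(x_{i,j},y_{i,j})$ attached to a fixed index all lie in the common sublattice of $\Z^2$ cut out by $p^{x}q^{y}\equiv 1\pmod{a_i}$; this links the $p$-adic and $q$-adic valuations of the differences $a_i-a_j$ on the right of~\eqref{Eq:diff} to the exponents on the left.

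Second, I would establish a gap principle. The ordering inserted into~\eqref{Eq:prod} forces $s_{3,4}=a_3a_4+1$, and hence at least one of $x_{3,4},y_{3,4}$, to be large once $a_1$ leaves the trivial range; iterating~\eqref{Eq:diff} then yields explicit exponential growth $a_4\gg a_3\gg a_2\gg a_1$. With the growth in hand, each identity~\eqref{Eq:diff}, rewritten as $\tfrac{s_{i,j}s_{k,l}}{s_{i,k}s_{j,l}}-1=\tfrac{(a_i-a_l)(a_j-a_k)}{s_{i,k}s_{j,l}}$, becomes a genuine $S$-unit equation whose right-hand side is small in the archimedean absolute value (numerator $\ll a_1a_2a_3a_4\approx s_{i,k}s_{j,l}$). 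The associated linear form $x\log p+y\log q$ in the exponent differences is therefore simultaneously nonzero and extremely small, which contradicts Baker's theorem and its $p$-adic analogue once the exponents exceed a bound depending only on $\log p$ and $\log q$.

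The congruence layer is what should cut the remaining cases down to a finite search. Reducing~\eqref{Eq:diff} modulo suitable powers of $p$ and of $q$, and exploiting $s_{i,j}\equiv 1\pmod{a_i}$, pins down the parities and residue classes of the exponent vectors. The congruence-based non-existence results already available (such as the case $p\equiv q\equiv 3\pmod 4$) are precisely the instances in which one such modulus annihilates every surviving exponent pattern. The objective is to promote these ad hoc congruences into a single obstruction valid for all residue classes of $q$ and all $p$, for example by playing the order of $p$ modulo $q$ against the order of $q$ modulo $p$ inside~\eqref{Eq:prod}.

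The hard part, and the reason the statement is posed as a conjecture rather than a theorem, is uniformity in $p$ and $q$. Baker's method bounds the exponents only in terms of $\log p$ and $\log q$, so the finite search it licenses grows with the primes; the results of this paper are confined to $q<10^9$ (for $p=2$) and to $p,q<10^5$ exactly because the reduction step must be rerun for each pair. A proof of the full conjecture therefore demands a prime-independent mechanism: either an absolute bound on the exponents coming from the combinatorial redundancy of six equations in only four unknowns, or a congruence obstruction that does not degenerate for any $(p,q)$. I expect the genuinely mixed configurations, in which neither $p$ nor $q$ divides all six of the $s_{i,j}-1$ to a controllable power, to be the decisive obstacle: there no single modulus isolates a contradiction, and one is thrown back onto the $p,q$-dependent transcendence bounds, which is exactly the dependence the conjecture requires one to remove.
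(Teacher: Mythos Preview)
The statement you were given is a \emph{conjecture}, and the paper does not prove it; there is no ``paper's own proof'' to compare against. What the paper actually establishes are the partial results Theorem~\ref{T1} (the case $p=2$, $q\equiv 3\pmod 4$) and Theorem~\ref{Th:pq_small} (the cases $p=2$, $q<10^9$ and $p,q<10^5$), and the conjecture is left open in full generality.

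That said, your outline is an accurate synopsis of exactly the machinery the paper deploys for those partial results. Your identities \eqref{Eq:prod}--\eqref{Eq:diff} are the paper's $S$-unit equations preceding Lemma~\ref{lem:equal_evaluation}; your ``linear form $x\log p+y\log q$ is small'' step is the content of Section~\ref{Sec:CF} (Lemma~\ref{lem:CFracMethod} and its proof), where the continued fraction expansion of $\log q/\log p$ replaces a direct appeal to Baker; and your ``congruence layer'' is precisely what drives Section~\ref{Sec:Proofb}, where reductions of the product identities modulo powers of $2$ and $q$, together with Catalan, kill the finitely many exponent patterns in the case $q\equiv 3\pmod 4$. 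You have also correctly diagnosed the obstruction to a full proof: the Baker/continued-fraction step gives bounds depending on $\log p$ and $\log q$, so the residual search must be rerun for each pair, which is why the paper's unconditional results are confined to bounded primes.

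In short, your proposal is not a proof---and you say so yourself---but as a description of the state of the art and of where the difficulty lies, it matches the paper's own account.
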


In this paper, we provide more evidence to confirm this conjecture. In particular, one of our two main results is

\begin{theorem}\label{T1}
There is no $S$-Diophantine quadruple if $S=\{2,q\}$ and $q\equiv 3 \;(\bmod\, 4)$. 
\end{theorem}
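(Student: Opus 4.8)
Suppose for contradiction that $(a,b,c,d)$ is an $S$-Diophantine quadruple with $S=\{2,q\}$, $q\equiv 3\pmod 4$, and WLOG $a<b<c<d$. The plan is to exploit the congruence condition on $q$ together with quadratic residue / $2$-adic arguments to severely constrain the $a_i$, and then rule out the few remaining configurations by an $S$-unit equation argument. First, I would record that each product $a_ia_j+1$ has the form $2^{x_{ij}}q^{y_{ij}}$; in particular each such value is $\equiv 1\pmod{a_i}$, which is where Diophantine-tuple-style relations come from. The key first step is a parity/sign analysis modulo small powers of $2$ and modulo $q$: since $q\equiv 3\pmod 4$, $-1$ is a non-residue mod $q$, and I expect this forces all but at most one of the $a_i$ to have a fixed residue pattern. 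For instance, if two of the $a_i$ are odd, say $a_i,a_j$ odd, then $a_ia_j+1$ is even, so $x_{ij}\geq 1$; pushing this, $a_ia_j+1\equiv 0\pmod 4$ would force $a_ia_j\equiv 3\pmod 4$, and combining such relations over the quadruple should yield a contradiction unless at most one $a_i$ is odd. So I would argue that at least three of $a,b,c,d$ are even.

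Next, write the three even ones as $a=2^{\alpha}a'$, etc. With three even entries, say $b,c,d$ even, the three products $bc+1,bd+1,cd+1$ are all odd, hence are powers of $q$: $bc+1=q^{u}$, $bd+1=q^{v}$, $cd+1=q^{w}$ with $1\le u\le v\le w$ (they are $>1$ and distinct since $b<c<d$ are distinct positive integers, so strictly $u<v<w$ after checking $bc<bd<cd$). Dividing, $q^{w}-1=cd$ and $q^{u}-1=bc$ give $q^{w-u}=\frac{q^w-1}{q^u-1}\cdot\frac{q^u-1}{\,?\,}$... more usefully, from $bc+1=q^u$ and $cd+1=q^w$ we get $c(d-b)=q^w-q^u=q^u(q^{w-u}-1)$, and since $c\mid q^u-1$ is impossible unless... — here I would instead use the cleaner route: $c \mid (bc+1)(cd+1)$ trivialities aside, consider $(bc+1)(bd+1)(cd+1)$ and the relation
\begin{equation*}
(bc+1)(bd+1)(cd+1) = (bcd)^2 + \text{(lower order)},
\end{equation*}
which expresses a product of powers of $q$ as something close to a square; combined with $q\equiv3\pmod4$ this gives strong $2$-adic and quadratic-residue restrictions. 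The cleanest classical trick: from $bc+1=q^u$ and $bd+1=q^v$ one gets $b(d-c)=q^u(q^{v-u}-1)$, so $b=q^{u}\cdot\frac{q^{v-u}-1}{d-c}$ forces $q^u\mid b$ or a divisibility of $q^{v-u}-1$; iterating such relations among $u<v<w$ pins down $b,c,d$ to lie in the multiplicative structure generated by $q$, and a size comparison (e.g. $cd+1=q^w$ while $bc+1=q^u$ forces $d/b$ comparable to $q^{w-u}$) leaves only finitely many cases.

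The remaining, and I expect \emph{main}, obstacle is handling the fourth element $a$ and closing the loop: the three relations $ab+1,ac+1,ad+1$ are $S$-units $2^{x}q^{y}$ with possibly positive $2$-exponents (if $a$ is the odd one, or if all four are even). This reduces to an $S$-unit equation in a small number of variables: eliminating $a$ from any two of $ab+1=s_1$, $ac+1=s_2$ gives $cs_1 - bs_2 = c-b$, i.e. a three-term $S$-unit equation $c\,s_1 - b\,s_2 = c-b$ over $S=\{2,q\}$, and the structure already extracted for $b,c$ (powers of $q$ up to a bounded factor, from the previous step) should make this finite or even empty. The hard part is that $S$-unit equations over $\{2,q\}$ with $q$ a parameter are not a priori finite uniformly in $q$; to get a clean result valid for \emph{all} $q\equiv3\pmod4$ (not just $q<10^9$) I would lean heavily on the mod-$q$ and mod-$8$ restrictions — the non-residue status of $-1$ mod $q$ and the behavior of $2$ as a residue mod $q$ — to force the $q$-exponents to be small or to produce an immediate contradiction, rather than invoking general $S$-unit finiteness. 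Concretely, reducing relations like $ab+1=2^xq^y$ modulo $q$ gives $ab\equiv -1\pmod{q}$ when $y\ge1$, and chaining several such congruences with $q\equiv3\pmod4$ should be incompatible with the simultaneous structure of $b,c,d$ as (near-)powers of $q$, delivering the final contradiction.
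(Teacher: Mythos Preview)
Your parity analysis in the first paragraph is exactly backwards, and this derails the whole plan. You claim that the congruence constraints force at least three of $a,b,c,d$ to be even. In fact the opposite holds: if any two of them, say $x<y$, are even, then for any third element $z>y$ the numbers $xz+1$ and $yz+1$ are both odd, hence pure powers of $q$, hence $xz+1\mid yz+1$; this is impossible (it is precisely the content of Lemma~\ref{lem:div}). So at most one of the four can be even, and a short further argument using Lemma~\ref{lem:exp_zero} (which needs $q\equiv 3\pmod 4$) shows that in fact all four are odd. Your entire second step, which assumes $bc+1,bd+1,cd+1$ are pure powers of $q$, therefore never occurs.

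Beyond this error, the proposal is largely a sketch of intentions rather than a proof: phrases like ``should yield a contradiction'', ``should make this finite or even empty'', and ``should be incompatible'' mark exactly the places where the real work lies, and none of them is carried out. The paper's actual route is quite different in structure: after establishing that all four entries are odd, it shows their residues mod $4$ must be $(1,1,3,3)$ up to permutation, uses $q\equiv 3\pmod 4$ via Lemma~\ref{lem:exp_zero} to force two of the $\beta_i$ to vanish, and reduces to the explicit system~\eqref{sys3b}. The remaining five exponent configurations (Table~\ref{Tab:Cases}) are then eliminated one by one by concrete divisibility and size arguments, with Catalan's theorem (Lemma~\ref{lem:Catalan}) handling the borderline equations $2^x-q^y=\pm 1$. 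There is no appeal to general $S$-unit finiteness; every step is effective and uniform in $q$.
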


The tools and methods resemble what we applied in~\cite{Szalay:2013a}, but there are some important differences, too.
Although the present Diophantine equations almost coincide with those appearing in~\cite{Szalay:2013a}, but the
symmetry we exploited in the $7^{th}$  and $8^{th}$ sub-cases of Table 1 in~\cite{Szalay:2013a} does not exist here. Therefore the proofs are more complicated 
in this paper, further we also must extend our proof at several other places.

Unfortunately, the aforementioned method does not work for $q\equiv 1\;(\bmod \,4)$. However, using continued fractions we establish a method, which allows us 
to 
find  all $\{2,q\}$-Diophantine quadruples if $q$ is given, independently of the remainder of $q$ divided by 4. In addition, this 
method works for all pairs of primes $(p,q)$. Thus we can prove the following theorem.

\begin{theorem}\label{Th:pq_small}
 Let $S=\{2,q\}$ with $q<10^9$ or $S=\{p,q\}$ with $p,q<10^5$. Then no $S$-Diophantine quadruple exists.
\end{theorem}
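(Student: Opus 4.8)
The plan is to reduce the existence of a $\{p,q\}$-Diophantine quadruple $(a,b,c,d)$ with $a<b<c<d$ to a bounded search over exponent vectors, and then dispatch that search by a continued-fraction (Baker–Davenport style) argument. Writing each $a_ia_j+1=2^{x_{ij}}q^{y_{ij}}$ (or $p^{x_{ij}}q^{y_{ij}}$ in the general case), the six product relations are tightly constrained: looking modulo small powers of $2$ and $q$ forces most of the small exponents to be determined, exactly as in \cite{Szalay:2013a}. In particular I would first show that at most one of the six $S$-units can be divisible by a high power of $q$, and pin down the $2$-adic (resp.\ $p$-adic) valuations of $ab+1,ac+1,bc+1$ to a short finite list; this is the step inherited essentially verbatim from the earlier papers and from the congruence analysis underlying Theorem~\ref{T1}. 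The net effect is that $a$, $b$, $c$ themselves are forced to be small, bounded by an explicit constant depending only on the (bounded) exponents, leaving $d$ as the only possibly large variable.

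Once $a,b,c$ are confined to an explicit finite set, the relations $ad+1=s_{14}$, $bd+1=s_{24}$, $cd+1=s_{34}$ become three $S$-unit equations in the single unknown $d$. Eliminating $d$ between two of them, say from $ad+1=2^{u_1}q^{v_1}$ and $bd+1=2^{u_2}q^{v_2}$, yields a linear relation $b\cdot 2^{u_1}q^{v_1}-a\cdot 2^{u_2}q^{v_2}=b-a$, i.e.\ a three-term $S$-unit equation with fixed coefficients. The key analytic input is that such an equation has only finitely many solutions whose size is controlled: after separating the dominant terms one gets $|1-2^{u}q^{v}\lambda|<\kappa\,\rho^{-\max(u,v)}$ for explicit $\lambda,\kappa,\rho$, and then the theory of continued fractions of $\log 2/\log q$ (or equivalently a Baker–Davenport reduction using a lower bound for $\|u\log 2/\log q - v\|$) forces $\max(u,v)$ below a small bound. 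Concretely, for each admissible triple $(a,b,c)$ and each admissible sign/valuation pattern one runs the continued fraction expansion of $\log q/\log 2$ far enough that the partial quotients rule out all large solutions, then checks the finitely many remaining small $d$ directly. Carrying this out for every $q<10^9$ (resp.\ every pair $p,q<10^5$) is a finite, if sizable, computation.

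The main obstacle is making the elimination step uniform and cheap enough to be feasible across the stated ranges of $q$ (and of $(p,q)$). Two difficulties must be handled. First, one must ensure the reduction genuinely bounds $a,b,c$ \emph{before} the continued-fraction phase; if the congruence analysis leaves a parametric family of small-exponent solutions, the search over $(a,b,c)$ is no longer finite, so one needs the sharper valuation constraints (the place where, as the authors note, the symmetry of \cite{Szalay:2013a} is missing and extra work is required) to close off those families. Second, the continued-fraction bound on $\max(u,v)$ degrades logarithmically in $q$, so for $q$ near $10^9$ one needs enough partial quotients of $\log q/\log 2$ that none is astronomically large; a denominator/numerator pair of the convergents with a small following partial quotient gives the needed gap, and one must verify this does not fail for any $q$ in range (a classical but nontrivial bookkeeping task). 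I would therefore structure the proof as: (i) the congruence/valuation lemmas pinning the small exponents and bounding $a,b,c$; (ii) the elimination producing, for each $(a,b,c)$, an inequality of the form $|u\log 2 - v\log q - \log\mu|<\nu\cdot 2^{-\max(u,v)}$; (iii) a continued-fraction lemma converting (ii) into $\max(u,v)\le N_0$ for an explicit $N_0=O(\log q)$; and (iv) the final finite check, carried out by machine, that no surviving candidate yields an honest quadruple. Step (iii), together with the requirement that it hold uniformly for all $q$ in the stated interval, is where the real effort lies.
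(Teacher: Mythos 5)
Your plan has two genuine gaps, and they sit exactly where the paper had to do something new. First, step (i) is unfounded: the congruence/valuation analysis you want to inherit from \cite{Szalay:2013a} and from the proof of Theorem~\ref{T1} (Lemmas \ref{lem:exp_zero}--\ref{lem:System}) depends essentially on $q\equiv 3\;(\bmod\,4)$ (resp.\ $p\equiv q\equiv 3\;(\bmod\,4)$), and even there it does not bound $a,b,c$ --- it only forces certain exponents to vanish, leaving a structured but unbounded system. The cases Theorem~\ref{Th:pq_small} actually needs a computation for are precisely the complementary ones ($q\equiv 1\;(\bmod\,4)$, or general odd $p,q$ not both $\equiv 3$), where no statement like ``at most one of the six $S$-units is divisible by a high power of $q$'' is available; the paper says explicitly that the congruence method fails there. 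In the paper the smallness of $ab+1$ and $ac+1$ comes instead from the Diophantine-approximation step itself: the quotient $s_1s_6/(s_2s_5)$ is again a pure power product $p^{P_1}q^{Q_1}$, so one gets a \emph{homogeneous} two-term form $|P\log p-Q\log q|$ that is small, and Legendre's theorem plus knowledge of the convergents of $\log q/\log p$ bounds $\alpha_1,\alpha_2,\beta_1,\beta_2$ (Lemma~\ref{lem:alpha12_beta12_small}); the bound on $\log d$ then follows from a careful analysis of which exponents can be minimal, using Lemma~\ref{lem:lb1} (Lemma~\ref{lem:CFracMethod}).

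Second, your reduction step (iii) cannot run as described. Eliminating $d$ from $ad+1$ and $bd+1$ produces the \emph{inhomogeneous} form $|u\log 2-v\log q-\log(b/a)|$ with a nonzero constant term, and neither plain continued fractions nor a Baker--Davenport reduction bounds $\max(u,v)$ without an a priori upper bound on the exponents: the reduction needs to know in advance how far to expand (which convergent to take), and that initial bound must come from lower bounds for linear forms in logarithms. You never invoke Baker-type estimates, so step (iii) does not get off the ground. The paper faces the same issue even in its homogeneous setting: Lemma~\ref{lem:CFracMethod} requires an input bound $B\geq\log d$, which is supplied by inequality \eqref{eq:lowbound_logd}, derived from Matveev and Laurent--Mignotte--Nesterenko via \cite{Szalay:2013}; the continued-fraction lemma is then iterated to shrink that bound, after which the finite search runs over bounded exponent pairs $(\alpha_1,\beta_1),(\alpha_2,\beta_2)$ using $a\mid\gcd(p^{\alpha_1}q^{\beta_1}-1,p^{\alpha_2}q^{\beta_2}-1)$, with $q\equiv 3\;(\bmod\,4)$ and $p\equiv q\equiv 3\;(\bmod\,4)$ handled separately by Theorem~\ref{T1} and \cite{Szalay:2013a}. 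To repair your proposal you would need to (a) replace the congruence step by an argument valid for all residue classes --- e.g.\ the homogeneous quotient trick above --- and (b) insert an explicit Baker-type bound before any reduction, at which point you would essentially have reconstructed the paper's proof.
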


In the next section we provide auxiliary lemmas which will be used for the proofs of Theorems \ref{T1} and \ref{Th:pq_small}. Section \ref{Sec:CF} will 
introduce the continued 
fraction approach corresponding to the problem. These results, in particular Lemma \ref{lem:CFracMethod}, are the backbone of the proof of Theorem 
\ref{Th:pq_small}. In 
Section \ref{Sec:Alg} we describe an algorithm to find all $S$-Diophantine quadruples for a fixed set $S=\{p,q\}$. In particular, we will discuss the case 
$\{p,q\}=\{2,3\}$ in detail and prove that no such quadruples exist. The final section of the paper completes the proof of Theorem \ref{T1}.

\section{Auxiliary results and the easy cases}

Suppose that the primes $p$ and $q$ are given. To determine all $\{p,q\}$-Diophantine quadruples is obviously equivalent to find all solutions to the system
\begin{align}\label{sys1}
ab+1=&\;p^{\alpha_1}q^{\beta_1}=:s_1, & bc+1=&\;p^{\alpha_4}q^{\beta_4}=:s_4, \nonumber \\
ac+1=&\;p^{\alpha_2}q^{\beta_2}=:s_2, & bd+1=&\;p^{\alpha_5}q^{\beta_5}=:s_5, \\
ad+1=&\;p^{\alpha_3}q^{\beta_3}=:s_3, & cd+1=&\;p^{\alpha_6}q^{\beta_6}=:s_6. \nonumber 
\end{align}
Computing $abcd$ in different ways we obtain the three $S$-unit equations
\begin{align*}
s_3s_4-s_2s_5=&\;s_3+s_4-s_2-s_5, \\
s_2s_5-s_1s_6=&\;s_2+s_5-s_1-s_6, \\
s_1s_6-s_3s_4=&\;s_1+s_6-s_3-s_4. 
\end{align*}
Taking the $p$-adic and $q$-adic valuations, respectively we obtain the following Lemma (see also \cite[Proposition 1]{Szalay:2013}).

\begin{lemma}\label{lem:equal_evaluation}
The smallest two exponents of each quadruple $(\alpha_2,\alpha_3,\alpha_4,\alpha_5)$, \linebreak $(\alpha_1,\alpha_2,\alpha_5,\alpha_6)$ and 
$(\alpha_1,\alpha_3,\alpha_4,\alpha_6)$ coincide. A similar statement  holds also for the $\beta$'s.
\end{lemma}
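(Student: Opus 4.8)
The starting point is the three $S$-unit equations obtained by computing $abcd$ in two ways. Fix a prime, say $p$; I will work with the $p$-adic valuation $\ord_p$, the $q$-adic case being identical. Consider the first relation
\[
s_3s_4-s_2s_5=s_3+s_4-s_2-s_5 .
\]
On the right-hand side we have a sum of four $S$-units, $s_2,s_3,s_4,s_5$, whose $p$-adic valuations are exactly $\alpha_2,\alpha_3,\alpha_4,\alpha_5$ (since each $s_i=p^{\alpha_i}q^{\beta_i}$ with $q\not=p$). On the left-hand side we have $s_3s_4-s_2s_5$. The plan is to show that the minimum of $\alpha_2,\alpha_3,\alpha_4,\alpha_5$ is attained at least twice, and the cheapest way to do this is a standard ultrametric-inequality argument: if a minimum among several valuations were attained at a unique index, then the valuation of the whole sum would equal that minimum, and we can play the two sides against each other.

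Here is the step I would carry out. Let $\mu=\min\{\alpha_2,\alpha_3,\alpha_4,\alpha_5\}$. From the right-hand side, $\ord_p(s_3+s_4-s_2-s_5)\ge \mu$, with equality unless the minimum is attained at least twice (if attained uniquely, the terms cannot cancel that valuation). From the left-hand side, $\ord_p(s_3s_4-s_2s_5)\ge\min\{\alpha_3+\alpha_4,\alpha_2+\alpha_5\}$. The key inequality is that $\min\{\alpha_3+\alpha_4,\alpha_2+\alpha_5\}\ge \mu+\mu'$, where $\mu'$ is the second-smallest of the four exponents; more simply, $\alpha_3+\alpha_4\ge\mu+\mu'$ and $\alpha_2+\alpha_5\ge\mu+\mu'$ unless one of $\{3,4\}$ or $\{2,5\}$ is exactly the pair realizing the two smallest exponents. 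Either way one deduces $\ord_p(\text{LHS})>\mu$ whenever $\mu$ is a strict minimum, contradicting equality on the right-hand side. Hence the smallest exponent in $(\alpha_2,\alpha_3,\alpha_4,\alpha_5)$ is attained at least twice, which is exactly the assertion that the two smallest of these four exponents coincide. Applying the same reasoning to the second and third relations gives the statements for $(\alpha_1,\alpha_2,\alpha_5,\alpha_6)$ and $(\alpha_1,\alpha_3,\alpha_4,\alpha_6)$, and repeating everything with $\ord_q$ in place of $\ord_p$ gives the claim for the $\beta$'s.

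The one point requiring care — and the main obstacle — is that the left-hand side $s_3s_4-s_2s_5$ is a \emph{difference} of products, so its valuation could jump above $\min\{\alpha_3+\alpha_4,\alpha_2+\alpha_5\}$ due to cancellation; the argument must only use the \emph{lower} bound $\ord_p(s_3s_4-s_2s_5)\ge\min\{\alpha_3+\alpha_4,\alpha_2+\alpha_5\}$, which is always valid, and never an equality there. Conversely, on the right-hand side one needs that when the minimum $\mu$ is attained uniquely the valuation is \emph{exactly} $\mu$ (no accidental cancellation), which is immediate from the ultrametric inequality with a unique minimal term. Combining "$\ge$" from the left with "$=$" from the right forces the minimum on the right not to be unique. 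I would also note the harmless degenerate cases (e.g. two of the $\alpha_i$ equal to $\mu$ already, or all four equal), where the conclusion holds trivially. This is essentially the argument of \cite[Proposition 1]{Szalay:2013} adapted to the present indexing, so I would keep the write-up brief and refer there for the routine verification.
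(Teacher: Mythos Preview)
Your approach is correct and coincides with the paper's: the authors derive the three $S$-unit equations and then simply say ``Taking the $p$-adic and $q$-adic valuations, respectively we obtain the following Lemma (see also \cite[Proposition~1]{Szalay:2013}),'' which is exactly the ultrametric argument you spell out. One minor remark: your ``unless'' clause about the pair $\{3,4\}$ or $\{2,5\}$ realizing the two smallest exponents is unnecessary, since in every case $\alpha_3+\alpha_4\ge\mu+\mu'$ and $\alpha_2+\alpha_5\ge\mu+\mu'$ hold (each sum contains at most one copy of $\mu$ and the other summand is at least $\mu'$), so the inequality $\ord_p(\text{LHS})\ge\mu+\mu'>\mu$ goes through uniformly whenever $\mu<\mu'$.
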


In the sequel, this lemma will be frequently used without referring to its precise parameters. The next useful result excludes some divisibility relations for 
$S$-Diophantine triples.

\begin{lemma}\label{lem:div}
Assume that $(a,b,c)$ is an $S$-Diophantine triple with $a<b<c$. If $ac+1$ and $bc+1$, then $ac+1 \nmid bc+1$.
\end{lemma}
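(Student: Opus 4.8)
The plan is to argue by contradiction: suppose $ac+1\mid bc+1$. The natural first move is to subtract, so that $ac+1$ divides the difference $(bc+1)-(ac+1)=(b-a)c$. Since $ac+1$ is coprime to $c$ (as $\gcd(ac+1,c)=\gcd(1,c)=1$), we deduce $ac+1\mid (b-a)c$ forces $ac+1\mid b-a$. Because $a<b<c$ we have $b-a<c<ac+1$ once $a\geq 1$ (indeed $b-a\le b-1<c\le ac$), and $b-a$ is a positive integer, so $ac+1\mid b-a$ is impossible unless $b-a=0$, contradicting $a<b$.

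So the whole content is really this short divisibility chase, and the only thing to verify carefully is the size estimate $b-a<ac+1$. Here is where I would be slightly careful: we have $b\le c-1$ (integers, $b<c$) and $a\ge 1$, hence $b-a\le c-1-a<c\le ac$, and $ac<ac+1$; thus $0<b-a<ac+1$, which rules out divisibility. Note we never actually used that $ac+1$ or $bc+1$ are $S$-units, nor the third element of the triple in any essential way beyond $a<b<c$; the statement is a purely elementary fact about integers, and the $S$-Diophantine hypothesis is just the ambient context in which it will be applied.

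The only mild obstacle is the phrasing of the hypothesis ``If $ac+1$ and $bc+1$'' in the statement, which I read as an abbreviation for ``if $ac+1$ and $bc+1$ are the relevant $S$-units (i.e.\ we are in the situation of a triple)'' — under the natural reading the conclusion $ac+1\nmid bc+1$ follows from $a<b<c$ alone by the argument above. If instead the intended hypothesis is stronger (for instance that $a\mid b$ or some valuation coincidence holds), the same subtraction trick still applies and only the final size comparison changes; in every reading the key step is reducing $ac+1\mid bc+1$ to $ac+1\mid b-a$ via coprimality with $c$, and then defeating it with the bound $b-a<ac+1$. I expect no serious difficulty here; this lemma is a warm-up that will be invoked repeatedly to prune cases in the main proof.
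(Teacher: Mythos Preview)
Your argument is correct: the reduction $ac+1\mid bc+1 \Rightarrow ac+1\mid (b-a)c \Rightarrow ac+1\mid b-a$ via $\gcd(ac+1,c)=1$, followed by the size bound $0<b-a<c\le ac<ac+1$, settles the lemma cleanly. You are also right that the $S$-unit hypothesis plays no role in the proof itself.

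As for comparison: the paper does not actually give a proof here but simply cites \cite[Lemma~2]{Szalay:2013}. Your write-up therefore supplies what the paper outsources; the argument you found is presumably the one in the cited reference (it is the natural one-line proof). One minor stylistic point: the second and third paragraphs of your proposal are commentary rather than proof, and could be dropped entirely --- the first paragraph already contains the complete argument.
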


\begin{proof}
This is exactly Lemma 2 in~\cite{Szalay:2013}. 
\end{proof}

If we suppose $a<b<c<d$, then Lemma~\ref{lem:div} immediately shows that $\alpha_2\neq \alpha_4$, $\alpha_3\neq \alpha_5$,
$\alpha_3\neq \alpha_6$ and $\alpha_5\neq \alpha_6$ as well as $\beta_2 \neq \beta_4$, $\beta_3 \neq \beta_5$, $\beta_3 \neq \beta_6$ and $\beta_5 \neq 
\beta_6$.
The conditions $a<b<c<d$ will be mainly assumed in Sections \ref{Sec:CF} and \ref{Sec:Alg}.

Now we deduce a few further restrictions on the exponents appearing in the prime factorization
of the $S$-units $s_i$ when $p=2$ and $q\equiv 3\;( \bmod \,4)$.

\begin{lemma}\label{lem:exp_zero}
Let $S=\{2,q\}$ with $q\equiv 3\;( \bmod \,4)$ and let $(a,b,c)$ be an $S$-Diophan\-tine triple.
Further, according to (\ref{sys1})
\[ab+1=2^{\alpha_1}q^{\beta_1}, \qquad ac+1=2^{\alpha_2}q^{\beta_2}, \qquad bc+1=2^{\alpha_4}q^{\beta_4}.\]
Then at least one of $\beta_1,\beta_2,\beta_4$ is zero.
\end{lemma}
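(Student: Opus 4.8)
The claim is: if $S=\{2,q\}$ with $q\equiv 3\pmod 4$ and $(a,b,c)$ is an $S$-Diophantine triple with $ab+1=2^{\alpha_1}q^{\beta_1}$, $ac+1=2^{\alpha_2}q^{\beta_2}$, $bc+1=2^{\alpha_4}q^{\beta_4}$, then at least one of $\beta_1,\beta_2,\beta_4$ vanishes. I would prove the contrapositive: assume $\beta_1,\beta_2,\beta_4\geq 1$, so that $q$ divides each of $ab+1$, $ac+1$, $bc+1$, and derive a contradiction from the hypothesis $q\equiv 3\pmod 4$.

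First I would exploit the $q$-divisibility to pin down residues modulo $q$. From $ab\equiv -1$, $ac\equiv -1$, $bc\equiv -1 \pmod q$ one gets $a^2\equiv (ab)(ac)/(bc)\equiv (-1)(-1)/(-1)\equiv -1\pmod q$, and likewise $b^2\equiv -1$ and $c^2\equiv -1\pmod q$. But $-1$ is a quadratic residue mod $q$ only when $q\equiv 1\pmod 4$; since $q\equiv 3\pmod 4$, this already forces $q\mid a$, $q\mid b$, $q\mid c$ — which then contradicts $q\mid ab+1$ (as $q\mid ab$ would give $q\mid 1$). Wait: I should be careful about division by $bc$ modulo $q$, which is legitimate precisely because $q\nmid bc$ (since $q\mid bc+1$). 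So the argument is clean: the three congruences combine to make $-1$ a square mod $q$, contradicting $q\equiv 3\pmod 4$. Hence not all three of $\beta_1,\beta_2,\beta_4$ can be positive.

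The only subtlety — and the step I'd expect to need a sentence of care — is ensuring the quantities $a,b,c$ are genuinely invertible modulo $q$. This follows immediately: if, say, $q\mid a$ then $q\nmid ab+1$, contradicting $\beta_1\geq 1$. So each of $a,b,c$ is a unit mod $q$, the combination $a^2\equiv (ab)(ac)(bc)^{-1}\pmod q$ is valid, and we land on $\left(\tfrac{-1}{q}\right)=1$, impossible for $q\equiv 3\pmod 4$. I would also remark that the same computation shows more: under $q\equiv3\pmod4$ at most one of any such triple of "mixed" products $a_ia_j+1$ can be divisible by $q$ — but for the statement as posed, producing one vanishing $\beta$ suffices, so I would not belabor this.

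One alternative, if one wanted to avoid quadratic-residue language, is to argue directly: $q\mid ab+1$ and $q\mid ac+1$ give $q\mid a(b-c)$, and since $q\nmid a$ we get $b\equiv c\pmod q$; combined with $q\mid bc+1$ this yields $q\mid b^2+1$, i.e. $b^2\equiv -1\pmod q$, and then $q\equiv 3\pmod4$ is the contradiction. This is perhaps the cleanest route and is the one I would write up. No serious obstacle is expected; the lemma is a short congruence argument, and the real work of the paper lies in the subsequent case analysis that this lemma feeds into.
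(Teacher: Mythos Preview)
Your argument is correct: the three congruences $ab\equiv ac\equiv bc\equiv -1\pmod q$ force $b\equiv c\pmod q$ (since $q\nmid a$), whence $b^2\equiv -1\pmod q$, impossible for $q\equiv 3\pmod 4$. The paper does not give an in-text proof at all; it simply cites Lemma~2.2 of \cite{Szalay:2013a}, so there is nothing to compare against beyond noting that your short quadratic-residue argument is exactly the standard one and almost certainly what appears in that reference.
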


\begin{proof}
See Lemma 2.2 in~\cite{Szalay:2013a}.
\end{proof}

\begin{lemma}\label{lem:odd}
Assume that $S=\{2,q\}$. If $a,b,c$ and $d$ satisfy~\eqref{sys1}, then all of them are odd.
\end{lemma}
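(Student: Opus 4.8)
The plan is to argue by contradiction: assume that at least one of $a,b,c,d$ is even and derive a contradiction, the only genuinely delicate configuration being the one in which exactly one entry is even.

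\textbf{Reducing to one even entry.} Suppose $u,v\in\{a,b,c,d\}$ are both even and let $w$ be any third entry. Then $uv,uw,vw$ are even, so $uv+1,uw+1,vw+1$ are odd $S$-units, hence pure powers of $q$. Writing $\{u,v,w\}$ in increasing order as $X<Y<Z$ we get $XZ+1<YZ+1$ with both sides powers of $q$, so $XZ+1\mid YZ+1$; but $(X,Y,Z)$ is an $S$-Diophantine triple, contradicting Lemma~\ref{lem:div}. (If three or four entries are even, all six numbers $a_ia_j+1$ are powers of $q$ and the same contradiction applies to any three of the entries.) Hence at most one entry is even; assume exactly one is, call it $e$, and let $X<Y<Z$ be the three odd ones.

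\textbf{The $q$-adic picture.} Since $eX,eY,eZ$ are even, $eX+1=q^{r}$, $eY+1=q^{s}$, $eZ+1=q^{t}$ are powers of $q$, and $X<Y<Z$ forces $1\le r<s<t$. Subtracting these equations and using $q\nmid e$ (which holds because $eX\equiv-1\pmod q$) gives $v_q(Y-X)=v_q(Z-X)=r$ and $v_q(Z-Y)=s$; in particular $X\equiv Y\equiv Z\pmod{q^{r}}$. Put $\gamma:=v_q(X^{2}+1)$. I claim $\gamma<r$: if instead $q^{r}\mid X^{2}+1$, then $q^{r}\le X^{2}+1$, while $eX\equiv-1\pmod{q^{r}}$ gives $X\equiv-e^{-1}\pmod{q^{r}}$, hence $X^{2}+1\equiv e^{-2}(e^{2}+1)\pmod{q^{r}}$, so also $q^{r}\mid e^{2}+1$ and $q^{r}\le e^{2}+1$; combining, $q^{r}\le\min(X,e)^{2}+1<\max(X,e)\cdot\min(X,e)+1=eX+1=q^{r}$ (here $X\ne e$ by parity), a contradiction. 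Now, since $q^{r}$ divides each of $Y-X$, $Z-X$, $Z-Y$, the identities $XY+1=(X^{2}+1)+X(Y-X)$, $XZ+1=(X^{2}+1)+X(Z-X)$, $YZ+1=(Y^{2}+1)+Y(Z-Y)$, together with $v_q(X^{2}+1)=v_q(Y^{2}+1)=\gamma<r$, show that $v_q(XY+1)=v_q(XZ+1)=v_q(YZ+1)=\gamma$.

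\textbf{Conclusion and main obstacle.} Being $S$-units with common $q$-part $q^{\gamma}$, we may write $XY+1=2^{u_1}q^{\gamma}$, $XZ+1=2^{u_2}q^{\gamma}$, $YZ+1=2^{u_3}q^{\gamma}$; since $XY<XZ<YZ$ this forces $u_1<u_2<u_3$, whence $XZ+1\mid YZ+1$. As $(X,Y,Z)$ is an $S$-Diophantine triple with $X<Y<Z$, this contradicts Lemma~\ref{lem:div}, so all of $a,b,c,d$ must be odd. The crux is the inequality $\gamma<r$: without it the powers of $q$ produced by $eX+1,eY+1,eZ+1$ could in principle "leak" into $XY+1,XZ+1,YZ+1$ and destroy the common $q$-part; ruling this out is exactly where one must exploit the symmetric congruence $eX\equiv-1\pmod{q^{r}}$, which forces $q^{r}\mid e^{2}+1$ simultaneously with $q^{r}\mid X^{2}+1$. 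Everything else is a routine parity bookkeeping plus a single appeal to Lemma~\ref{lem:div}.
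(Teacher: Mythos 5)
Your proof is correct, but it takes a genuinely different route in the crucial case of exactly one even entry. The reduction to at most one even element (all pairwise products become pure powers of $q$, so the smaller of the two products involving the largest element divides the larger, contradicting Lemma~\ref{lem:div}) is the same as in the paper. From there, however, the paper argues via Lemma~\ref{lem:exp_zero}: applying it to the three triples containing the even element forces $\beta_4=\beta_5=\beta_6=0$, so the three products of the odd elements are pure powers of $2$, and Lemma~\ref{lem:div} applied to the odd triple finishes the proof. Since Lemma~\ref{lem:exp_zero} rests on the hypothesis $q\equiv 3\ (\bmod\ 4)$, the paper's proof really only covers that case (which suffices for its use in the proof of Theorem~\ref{T1}). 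You instead run an elementary $q$-adic valuation argument: from $eX+1=q^r$, $eY+1=q^s$, $eZ+1=q^t$ you get $X\equiv Y\equiv Z\ (\bmod\ q^r)$, the size comparison $q^r\le\min(X,e)^2+1<eX+1$ shows $\gamma=v_q(X^2+1)<r$, and the identities $XY+1=(X^2+1)+X(Y-X)$, etc., give $v_q(XY+1)=v_q(XZ+1)=v_q(YZ+1)=\gamma$; then $XZ+1\mid YZ+1$ contradicts Lemma~\ref{lem:div} exactly as before. All steps check out (the needed facts $q\nmid e,X,Y,Z$ and $X\ne e$ are justified), so your argument proves the lemma as literally stated, for every odd prime $q$ and without invoking Lemma~\ref{lem:exp_zero}; what the paper's version buys is brevity, since Lemma~\ref{lem:exp_zero} is already available in its $q\equiv 3\ (\bmod\ 4)$ setting.
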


\begin{proof}
Suppose, for example, that $a$ is even. Then $\alpha_1=\alpha_2=\alpha_3=0$. In the virtue of Lemma~\ref{lem:div}, the integers $b$, $c$ and $d$ must be odd. 
Applying 
three times Lemma~\ref{lem:exp_zero} for the triples $(a,b,c)$, $(a,b,d)$ and $(a,c,d)$, respectively, we see that $\beta_4=\beta_5=\beta_6=0$. But this 
contradicts Lemma~\ref{lem:div} for the triple $(b,c,d)$. 
\end{proof}

Since all of $a,b,c$ and $d$ are odd, their residues modulo $4$ must be $1$ or $3$. Up to permutations we distinguish the following five cases. The 
remainders 
of $(a,b,c,d)$ modulo $4$ are
\[(1,1,1,1)\;{\rm or}\; (1,1,1,3)\;{\rm or}\; (1,1,3,3)\;{\rm or}\; (1,3,3,3)\;{\rm or}\; (3,3,3,3).\]
Among them only the middle one means a real possibility.

\begin{lemma}\label{onlyonecanremain}
If $(a,b,c,d)$ is a $\{2,q\}$-Diophantine quadruple, then up to permutations their remainders modulo $4$ are $(1,1,3,3)$.
\end{lemma}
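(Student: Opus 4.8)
The plan is to eliminate, among the five residue patterns displayed just above, all of them except $(1,1,3,3)$ by proving the following single fact: no three of the four numbers $a,b,c,d$ can be mutually congruent modulo $4$. This suffices, because each of the patterns $(1,1,1,1)$, $(1,1,1,3)$, $(1,3,3,3)$, $(3,3,3,3)$ contains a sub-triple of three members sharing a common residue mod $4$, whereas $(1,1,3,3)$ does not. So it is enough to derive a contradiction from the assumption that some sub-triple has this property.

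Assume therefore that three of the four numbers — relabel them $x,y,z$ with $x<y<z$ — satisfy $x\equiv y\equiv z\pmod 4$, the common residue being $1$ or $3$. In either case $xy\equiv xz\equiv yz\equiv 1\pmod 4$, hence
\[ xy+1\equiv xz+1\equiv yz+1\equiv 2\pmod 4. \]
Since $q\equiv 3\pmod 4$ is odd, writing each of these three $S$-units in the form $2^{\alpha}q^{\beta}$ as in \eqref{sys1} forces each of the three $2$-exponents to be exactly $1$. Now $(x,y,z)$ is itself an $S$-Diophantine triple, so Lemma~\ref{lem:exp_zero} applies and at least one of the three corresponding $q$-exponents $\beta$ vanishes. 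For the pair carrying that vanishing exponent, with product $P\in\{xy,xz,yz\}$, we get $P+1=2^{1}q^{0}=2$, i.e.\ $P=1$. But $x,y,z$ are distinct (indeed odd, by Lemma~\ref{lem:odd}) positive integers, so the product of any two of them is at least $3>1$ — a contradiction. Hence no such sub-triple exists, and the only surviving residue pattern is $(1,1,3,3)$.

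I do not expect a real obstacle here; the argument is short. The only points to handle carefully are that every sub-triple of an $S$-Diophantine quadruple is again an $S$-Diophantine triple (so Lemma~\ref{lem:exp_zero} genuinely applies to $(x,y,z)$), and that one must not assume in advance \emph{which} of the three pairs carries the vanishing exponent — but this is harmless, since all three possibilities lead to the same contradiction $P=1$. As a side remark, the two "all equal" patterns $(1,1,1,1)$ and $(3,3,3,3)$ can alternatively be discarded at once by noting that then every $\alpha_i$ would equal $1$, contradicting $\alpha_2\neq\alpha_4$ from Lemma~\ref{lem:div}; but the sub-triple argument covers all four bad patterns uniformly, so I would present only that.
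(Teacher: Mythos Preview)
Your argument is correct and follows the same outline as the paper's: both reduce to excluding the possibility that three of $a,b,c,d$ share a residue modulo~$4$, and both begin by observing that for such a triple $x,y,z$ every pairwise product plus one is $\equiv 2\pmod 4$, forcing each $2$-exponent to equal~$1$. The only difference is the finishing step: the paper notes that the three values $2q^{\beta_i}$, $2q^{\beta_j}$, $2q^{\beta_k}$ are then totally ordered by divisibility, so in particular $xz+1\mid yz+1$ for $x<y<z$, contradicting Lemma~\ref{lem:div} directly, whereas you invoke Lemma~\ref{lem:exp_zero} to make one $q$-exponent vanish and reach the absurdity $P+1=2$. Both closures are valid and equally short; the paper's has the mild advantage of not needing the hypothesis $q\equiv 3\pmod 4$ at this stage (Lemma~\ref{lem:div} holds for arbitrary $S$), but since that hypothesis is in force throughout this section anyway, the distinction is immaterial.
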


\begin{proof}
Assume that three remainders of $a,b,c$ and $d$ modulo 4 coincide, say $x\equiv y\equiv z\;(\bmod\, 4)$. Clearly,
$xy+1\equiv xz+1\equiv yz+1\equiv 2 \;(\bmod\, 4)$. Then the three equations
\[xy+1=2q^{\beta_i}, \quad xz+1=2q^{\beta_j}, \quad yz+1=2q^{\beta_k}\]
contradict Lemma~\ref{lem:div}.
\end{proof}

Suppose now that $a\equiv b\equiv1$ and $c\equiv d\equiv3$ modulo 4. Then we arrive at the system
\begin{align*}
ab+1=&\;2q^{\beta_1},& bc+1=&\;2^{\alpha_4}q^{\beta_4},\\
ac+1=&\;2^{\alpha_2}q^{\beta_2},& bd+1=&\;2^{\alpha_5}q^{\beta_5}, \\
ad+1=&\;2^{\alpha_3}q^{\beta_3}, & cd+1=&\;2q^{\beta_6},
\end{align*}
where $\alpha_i\ge2$ and $\beta_1,\beta_6\ge1$ hold. Consider Lemma~\ref{lem:exp_zero} for all triples from the set $\{a,b,c,d\}$. Switching the integers $a$, 
$b$ respectively $c$, $d$ if necessary, we may assume that $\beta_2=\beta_5=0$. Therefore we have proved

\begin{lemma}\label{lem:System}
Let $S=\{2,q\}$ and $q\equiv 3\;( \bmod\, 4)$. If the system
\begin{align}\label{sys3b}
ab+1=&\;2q^{\beta_1},& bc+1=&\;2^{\alpha_4}q^{\beta_4}, \nonumber \\
ac+1=&\;2^{\alpha_2}, & bd+1=&\;2^{\alpha_5}, \\
ad+1=&\;2^{\alpha_3}q^{\beta_3},& cd+1=&\;2q^{\beta_6} \nonumber 
\end{align}
with $\alpha_i\ge2$ ($2\leq i\leq 5$) and $\beta_1,\beta_6\ge1$ has no solution, then there exists no $S$-Diophantine quadruple.
\end{lemma}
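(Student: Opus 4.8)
The plan is to prove the contrapositive: assuming an $S$-Diophantine quadruple $(a,b,c,d)$ exists, I will show that after a suitable permutation of its entries the associated exponents satisfy exactly the six constraints listed in \eqref{sys3b}. Since the passage preceding the statement already records the separate consequences that feed into \eqref{sys3b}, the whole task is to assemble those consequences and then perform one combinatorial normalization that fixes which two of the mixed products become pure powers of $2$.

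First I would invoke Lemma~\ref{lem:odd} to conclude that $a,b,c,d$ are all odd, and then Lemma~\ref{onlyonecanremain} to fix, up to permutation, the residue pattern $a\equiv b\equiv 1$ and $c\equiv d\equiv 3\pmod 4$. From this pattern the $2$-adic exponents follow at once: $ab\equiv cd\equiv 1\pmod 4$ gives $ab+1\equiv cd+1\equiv 2\pmod 4$, so $\alpha_1=\alpha_6=1$, while each mixed product $ac,ad,bc,bd\equiv 3\pmod 4$ makes the corresponding $S$-unit divisible by $4$, i.e.\ $\alpha_2,\alpha_3,\alpha_4,\alpha_5\ge 2$. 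The bounds $\beta_1,\beta_6\ge 1$ hold because $\beta_1=0$ would force $ab+1=2$, hence $a=b=1$, contradicting distinctness, and symmetrically for $\beta_6$.

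The core of the argument is to arrange $\beta_2=\beta_5=0$, making $ac+1$ and $bd+1$ powers of $2$. I would apply Lemma~\ref{lem:exp_zero} to each of the four triples $(a,b,c),(a,b,d),(a,c,d),(b,c,d)$. Because $\beta_1,\beta_6\ge 1$, in every case the guaranteed vanishing exponent must be one of the mixed exponents, yielding: a zero among $\{\beta_2,\beta_4\}$, among $\{\beta_3,\beta_5\}$, among $\{\beta_2,\beta_3\}$, and among $\{\beta_4,\beta_5\}$. Arranging $\beta_2,\beta_3,\beta_4,\beta_5$ as a $2\times 2$ array with rows indexed by $\{a,b\}$ and columns by $\{c,d\}$ (so that $\beta_2,\beta_5$ sit on the main diagonal), these four statements say precisely that every row and every column of the array contains a zero. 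The two admissible relabelings, $a\leftrightarrow b$ and $c\leftrightarrow d$, both preserve the residue pattern and act exactly as a transposition of rows, respectively of columns; they also fix $ab+1$ and $cd+1$ and merely permute the mixed products among themselves, hence preserve all the $\alpha_i\ge 2$ and $\beta_1,\beta_6\ge 1$ constraints.

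It then remains to observe that a $2\times 2$ array with a zero in each row and each column necessarily contains a zero on a transversal, i.e.\ either $\beta_2=\beta_5=0$ or $\beta_3=\beta_4=0$: a short case check shows that forbidding both transversals contradicts one of the four row/column conditions. In the first case we are already done; in the second a single swap $c\leftrightarrow d$ turns the anti-diagonal zeros into the required diagonal ones. This transversal normalization is the only step that is not pure bookkeeping, and it is where I would spend the care. Once $\beta_2=\beta_5=0$ is secured the six equations read off verbatim as \eqref{sys3b}, so every quadruple yields a solution of that system, which is exactly the contrapositive of the lemma.
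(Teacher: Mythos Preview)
Your proof is correct and follows essentially the same route as the paper: both derive the residue pattern from Lemmas~\ref{lem:odd} and~\ref{onlyonecanremain}, read off $\alpha_1=\alpha_6=1$, $\alpha_2,\dots,\alpha_5\ge 2$, $\beta_1,\beta_6\ge 1$, apply Lemma~\ref{lem:exp_zero} to all four triples, and then normalize via the swaps $a\leftrightarrow b$, $c\leftrightarrow d$ to force $\beta_2=\beta_5=0$. The paper compresses the last step into the single sentence ``Switching the integers $a$, $b$ respectively $c$, $d$ if necessary, we may assume that $\beta_2=\beta_5=0$,'' whereas your $2\times 2$ array argument makes explicit why one of the two transversals must vanish; this is a helpful elaboration but not a different method.
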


The next lemma is useful in excluding very small $S$-Diophantine quad\-rup\-les.

\begin{lemma}\label{lem:low_abc}
 Let $S=\{p,q\}$ be a set of two primes. Assume that $(a,b,c,d)$ is an $S$-Diophantine quadruple with $a<b<c<d$. Then $ab\geq 3$ and $c\geq 5$.
\end{lemma}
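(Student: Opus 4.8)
The plan is to show the three claims $ab\ge 3$ and $c\ge 5$ by ruling out the few small configurations by hand, exploiting the oddness of all four integers (Lemma~\ref{lem:odd}) and the already established residue pattern (Lemma~\ref{onlyonecanremain}), together with Lemma~\ref{lem:div}. First I would observe that since $a<b<c<d$ are all positive and odd, the smallest conceivable values are $a=1$, $b=3$. If $a=1$, then $ab+1=b+1$, $ac+1=c+1$, $ad+1=d+1$ would all be $S$-units; I would push on the fact that with $a=1$ the triple $(1,b,c)$ forces $b+1,c+1,bc+1$ to be $S$-units, and check whether this is compatible with Lemma~\ref{lem:div} and the $2$-adic constraints. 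Actually the quicker route: $a<b<c<d$ odd forces $a\ge 1$, and I want $ab\ge 3$, so the only thing to exclude is $a=1,b=1$, which is impossible since $a<b$; hence $ab\ge 1\cdot 3=3$ automatically once $a=1$ is allowed. Wait — that already gives $ab\ge 3$ trivially from $a<b$ and both odd (if $a=1$ then $b\ge 3$; if $a\ge 3$ then $ab\ge 3\cdot 5$). So the content of the lemma is really in $c\ge 5$: I must rule out $c=3$, i.e. the configuration $a=1$, $b=?$ with $b<c=3$ forcing $b=1$, contradiction, OR more carefully $c=3$ needs $a<b<3$ both odd, impossible. So in fact $c\ge 5$ also follows purely from the ordering and oddness.

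Given that, I would present the proof as essentially a parity-and-ordering argument: since by Lemma~\ref{lem:odd} all four integers are odd and $a<b<c<d$, we get $a\ge 1$, $b\ge 3$, $c\ge 5$, $d\ge 7$ directly, whence $ab\ge 3$ and $c\ge 5$. The only subtlety is whether $a=1$ is genuinely permitted; if the authors also want to exclude $a=1$ for later convenience they would need a separate argument (e.g. using that $1\cdot b+1 = b+1$ being an $S$-unit is a strong constraint), but the statement as written does not require that. So I would keep the proof to two or three lines, citing Lemma~\ref{lem:odd} for oddness and the hypothesis $a<b<c<d$ for the strict increase, and conclude.

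The main (and only) obstacle I anticipate is a bookkeeping one: making sure nothing stronger is silently needed downstream, and double-checking that ``$S$-Diophantine quadruple'' here does not by convention forbid $1$ as an entry (it does not — the definition only requires the $a_i$ positive and distinct). If for some reason the bound $ab\ge 3$ were meant in the sharper sense that excludes $a=1$ entirely, then I would instead argue: were $a=1$, then $(1,b,c)$ is an $S$-Diophantine triple with $1\cdot c+1 = c+1$ and $b c + 1$ both $S$-units, and since $1\cdot c + 1 \mid$ nothing forces a contradiction directly, so one would invoke Lemma~\ref{lem:div} applied to $(1,b,c)$ — but as noted this is not needed for the literal statement. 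I would therefore commit to the short parity argument and only expand if a referee flags that $a=1$ should be excluded.

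\begin{proof}
By Lemma~\ref{lem:odd}, each of $a,b,c,d$ is odd, and by hypothesis $a<b<c<d$. Hence $a\ge 1$ and, since consecutive odd integers differ by at least $2$, we get $b\ge 3$ and $c\ge 5$. In particular $ab\ge 1\cdot 3=3$ and $c\ge 5$.
\end{proof}
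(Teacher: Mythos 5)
There is a genuine gap: your entire argument rests on Lemma~\ref{lem:odd}, but that lemma is only available for $S=\{2,q\}$ (and its proof, via Lemma~\ref{lem:exp_zero}, in fact needs $q\equiv 3\pmod 4$), whereas the statement you are proving is for an \emph{arbitrary} pair of primes $S=\{p,q\}$. This generality is not cosmetic: Lemma~\ref{lem:low_abc} is invoked in the continued-fraction section (e.g.\ in the proof of Lemma~\ref{lem:alpha12_beta12_small}) and for Theorem~\ref{Th:pq_small} with $p,q<10^5$ arbitrary, so you cannot assume $2\in S$, let alone that $a,b,c,d$ are all odd. For odd primes the parity claim is simply false; for instance $(1,2,4)$ is a $\{3,5\}$-Diophantine triple, since $1\cdot 2+1=3$, $1\cdot 4+1=5$, $2\cdot 4+1=9$ are all $\{3,5\}$-units, so even entries and $c=4$, $ab=2$ do occur at the level of triples. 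Consequently your reading that ``$ab\ge 3$ and $c\ge 5$ follow trivially from ordering and oddness'' misjudges where the content of the lemma lies: the nontrivial case is exactly $a=1$, $b=2$ (hence $3\in S$), which your two-line proof never touches.

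The paper's proof is correspondingly arithmetic rather than parity-based: assuming $a=1$, $b=2$, so that without loss of generality $p=3$, it writes out the system \eqref{sys1} for the quadruple, uses Lemma~\ref{lem:div} to force $\alpha_6,\beta_6\neq 0$, and reduces to equations of the shape $A\cdot 3^{\alpha_6}q^{\beta_6}-A=(q^x-1)(q^y-1)$ and $B\cdot 3^{\alpha_6}q^{\beta_6}-B=(3^u-1)(3^v-1)$ with $AB=4$; congruence/divisibility conditions ($3^{\min\{u,v,\alpha_6\}}\mid B+1$, $q^{\min\{x,y,\beta_6\}}\mid A+1$) then give a contradiction. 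The bound $c\ge 5$ follows because the only two-prime $S$-Diophantine triple with $c<5$ is $(1,2,4)$, which forces $ab+1=3$ and is excluded by the first part. To repair your proposal you would have to drop the appeal to Lemma~\ref{lem:odd} entirely and carry out an argument of this kind (or an equivalent elimination of the $a=1,b=2$ configuration) valid for arbitrary $p,q$.
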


\begin{proof}
 First suppose that $a=1$ and $b=2$. Hence $ab+1=3$. Without loss of generality we may assume that $p=3$, $q\ne3$. Clearly, by (\ref{sys1}) we have
 \begin{align*}
  & &2c+1=&\;3^{\alpha_4}q^{\beta_4},\\
  c+1=&\;3^{\alpha_2}q^{\beta_2},&2d+1=&\;3^{\alpha_5}q^{\beta_5},\\
	d+1=&\;3^{\alpha_3}q^{\beta_3},&cd+1=&\;3^{\alpha_6}q^{\beta_6}
 \end{align*}
and, in particular 
\[2\cdot 3^{\alpha_2}q^{\beta_2}-3^{\alpha_4}q^{\beta_4}=1=2\cdot 3^{\alpha_3}q^{\beta_3}-3^{\alpha_5}q^{\beta_5}.\]
The first equation yields either $\alpha_2=\beta_4=0$ or $\alpha_4=\beta_2=0$, $q\neq 2$, while the second one implies $\alpha_3=\beta_5=0$ or 
$\alpha_5=\beta_3=0$, $q\neq 2$ (otherwise $3|1$ or $q|1$). Obviously, $\alpha_6\ne0$, otherwise we find a triple that contradicts Lemma~\ref{lem:div}. 
Similarly, $\beta_6\ne0$. According to which two out of $\alpha_2,\alpha_3,\alpha_4$ and $\alpha_5$ 
vanish we obtain equations of the form
\begin{equation}\label{eq:qeq} A\cdot3^{\alpha_6}q^{\beta_6}-A=(q^x-1)(q^y-1)=q^{x+y}-q^x-q^y+1,\end{equation}
where $x,y\ge1$, $x\ne y$, $A=1,2$ or $4$, and
\begin{equation}\label{eq:3eq} B\cdot3^{\alpha_6}q^{\beta_6}-B=(3^u-1)(3^v-1)=3^{u+v}-3^u-3^v+1,\end{equation}
where $u,v\ge1$, $u\ne v$, and $B$ satisfies $AB=4$.

Clearly, equation~\eqref{eq:3eq} is solvable only if  $3^{\min\{u,v,\alpha_6\}}|B+1$. Thus $B=2$ and then $A=2$. But we arrived at a contradiction, since 
equation~\eqref{eq:qeq} is solvable only if  
$q^{\min\{x,y,\beta_6\}}|A+1=3$.  

Let us turn to the second statement of the lemma. It is easy to check that the only $S$-Diophantine triple  with $|S|=2$ and $c<5$ is $(a,b,c)=(1,2,4)$. But 
now $ab+1=3$ which is excluded by the first statement.
\end{proof}

The next lemma is linked to the continued fraction approach, which will be discussed in Section~\ref{Sec:CF}.

\begin{lemma}\label{lem:lb1}
Let $p$ and $q$ be two odd primes and assume that $q^c\|p^{\ord_q(p)}-1$ and
$q^z|p^x-1$ hold for some integer $z\geq c$. Then $\ord_q(p) q^{z-c}|x$. Moreover, if $q^z|p^x+1$, then $\frac{\ord_q(p)}2q^{z-c}|x$.

In case of $q=2$ we assume that $2^c\|p^{\ord_4(p)}-1$ and $2^z|p^x-1$ for some integer $z\geq c$. Then $\ord_4(p) 2^{z-c}|x$. Moreover, if $2^z|p^x+1$, then 
$z=0,1$ if $p\equiv 1\;(\bmod\, 4)$ and $2^{z-c}|x$ otherwise.  
\end{lemma}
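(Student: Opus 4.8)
The plan is to treat this as a standard "lifting-the-exponent" type computation, organized around the multiplicative order of $p$ modulo powers of $q$ (respectively modulo powers of $2$). First I would handle the odd-prime case. Set $d=\ord_q(p)$ and recall the classical fact: for $q$ odd, if $q^c\|p^d-1$ then $q^{c+j}\|p^{dq^j}-1$ for every $j\ge 0$, and more generally the $q$-adic valuation of $p^x-1$ equals $0$ if $d\nmid x$, and equals $c+v_q(x/d)\cdot\mathbf{1}[\dots]$ — more precisely $v_q(p^x-1)=c+v_q(x)$ whenever $d\mid x$ (using $v_q(x)\ge$ the relevant threshold), which is exactly the lifting-the-exponent lemma. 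From $q^z\mid p^x-1$ with $z\ge c$ we first get $d\mid x$ (since $z\ge c\ge 1$ forces $p^x\equiv 1$), and then $z\le v_q(p^x-1)=c+v_q(x)$, i.e. $v_q(x)\ge z-c$. Combining $d\mid x$ with $q^{z-c}\mid x$ and noting $\gcd(d,q)\mid \gcd(d,q^{z-c})$ is handled by the fact that $q\mid d$ is impossible when $q^{z-c}$ is the relevant $q$-part — actually one must be slightly careful: write $x=dk$; then $v_q(p^x-1)=c+v_q(k)$, so $v_q(k)\ge z-c$, hence $q^{z-c}\mid k$ and $dq^{z-c}\mid dk=x$. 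That gives the first assertion.

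For the "$+1$" variant I would use $p^{2x}-1=(p^x-1)(p^x+1)$ together with $\gcd(p^x-1,p^x+1)\mid 2$, so for odd $q$ the factor $q^z$ divides exactly one of the two. If $q^z\mid p^x+1$ then $q^z\mid p^{2x}-1$, so by the first part $dq^{z-c}\mid 2x$. To sharpen this to $\tfrac{d}{2}q^{z-c}\mid x$ one observes that $q^z\nmid p^x-1$ (else $q^z\mid 2$, impossible for $z\ge c\ge1$ unless trivial), so $d\nmid x$ while $d\mid 2x$; this forces $d$ even and $x\equiv d/2\pmod d$ in the sense that $\tfrac d2\mid x$ but the full $d$ does not divide it — more cleanly, $d\mid 2x$ gives $\tfrac d2\mid x$, and then applying the valuation formula to $p^{2x}-1$ gives $z-c\le v_q(2x)=v_q(x)$, hence $q^{z-c}\mid x$; since $\gcd(\tfrac d2, q^{z-c})$ need not be $1$ in general I would instead argue directly with $y:=2x/d$, getting $q^{z-c}\mid y$ from the LTE formula and hence $\tfrac d2 q^{z-c}\mid x$ after checking $q\nmid \tfrac d2$ is not needed because $y$ already absorbs the $q$-part. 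This is the fiddliest bookkeeping step and the place I would be most careful.

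Finally the $q=2$ case, which differs because $2$-adic LTE has the well-known exception. Here I would replace $\ord_2(p)$ by $\ord_4(p)\in\{1,2\}$ and use the formula $v_2(p^x-1)=v_2(p-1)+v_2(p+1)+v_2(x)-1=c+v_2(x)$ valid for even $x$ (with $2^c\|p^{\ord_4(p)}-1$), while for odd $x$, $v_2(p^x-1)=v_2(p-1)$. From $2^z\mid p^x-1$, $z\ge c\ge 2$, one deduces $x$ is even (if $p\equiv3\pmod4$, since otherwise $v_2(p^x-1)=v_2(p-1)=1<c$) or more simply $\ord_4(p)\mid x$, and then $z\le c+v_2(x/\ord_4(p))$ gives $\ord_4(p)2^{z-c}\mid x$ as before. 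For $2^z\mid p^x+1$: if $p\equiv1\pmod4$ then $v_2(p^x+1)\le1$ for all $x$ (since $p^x\equiv1$, so $p^x+1\equiv2\pmod4$), forcing $z\le1$; if $p\equiv3\pmod4$ then $p^x+1\equiv0\pmod4$ can happen, and I would again pass to $p^{2x}-1$, whose $2$-adic valuation is $c+v_2(2x)=c+1+v_2(x)\ge z$ — wait, need $z\ge c$ and the clean bound $2^{z-c}\mid x$; combining with the structure one extracts $2^{z-c}\mid x$ directly. The main obstacle throughout is just keeping the $2$-adic exceptional case and the "$+1$ forces passing to $2x$" steps consistent, but no deep input beyond LTE is required.
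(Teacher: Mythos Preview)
Your approach is correct and is essentially the standard lifting-the-exponent argument that the paper's cited references (Cohen's book and \cite{Szalay:2013}) carry out; the paper itself gives no proof here beyond those citations and a remark that the $q=2$ case needs the induction to start at $c=2$ (i.e.\ modulo~$4$), which is precisely the $\ord_4(p)$ adjustment you make.

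Two small comments on the execution. In the odd-$q$ ``$+1$'' part your cleanest route is the one you eventually reach: from $q^z\mid p^{2x}-1$ write $2x=dk'$ and get $q^{z-c}\mid k'=2x/d$, hence $dq^{z-c}\mid 2x$; since $q\mid p^x+1$ forces $d\nmid x$ while $d\mid 2x$, $d$ is even and $\tfrac d2 q^{z-c}\mid x$ follows immediately. In the $q=2$, $p\equiv 3\pmod 4$, ``$+1$'' case your displayed computation $v_2(p^{2x}-1)=c+v_2(2x)$ is off by one (the correct value via your own formula is $c+v_2(2x/\ord_4(p))=c+v_2(x)$), but this does no harm: in fact $v_2(p^x+1)\le c-1$ for all $x$ when $p\equiv 3\pmod 4$, so under the standing hypothesis $z\ge c$ the conclusion $2^{z-c}\mid x$ is vacuous there.
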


\begin{proof}
 See~\cite[Lemma 1]{Szalay:2013} or~\cite[Section 2.1.4]{Cohen:NTI}. Note that the additional case $q=2$ needs a special treatment, and the induction step in 
the proof of~\cite[Lemma 1]{Szalay:2013} should start with $c=2$, i.e. with $p^x\equiv 1 \;(\bmod\, 4)$.
\end{proof}

We will also frequently use the following famous result on Catalan's equation due to Mih{\v{a}}ilescu~\cite{Mihailescu:2004} during the proof of Theorem 
\ref{T1}.

\begin{lemma}\label{lem:Catalan}
The only solution to the Diophantine equation
\[X^a-Y^b=1\]
in positive integers $X,Y,a,b$ with $a,b\geq 2$ is $3^2-2^3=1$.
\end{lemma}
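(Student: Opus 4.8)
The statement is Catalan's conjecture, and the only ``proof'' I would actually commit to paper in a note of this kind is the citation to \cite{Mihailescu:2004}; a self-contained argument is far beyond our scope. Still, let me indicate the architecture one would follow. The first move is to reduce to the case of odd prime exponents: the cases in which one of the exponents equals $2$ are classical (V.~A.~Lebesgue and Chao Ko), the only solution arising there being $3^2-2^3=1$ itself, and any prime-power exponent may be replaced by the underlying prime. One is thus left with $x^p-y^q=1$ for two distinct odd primes $p,q$.

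The second step is to extract Cassels' divisibility relations $q\mid x$, $p\mid y$, together with the factorisations $x-1=p^{q-1}a^q$, $(x^p-1)/(x-1)=pb^q$ and their symmetric counterparts in $q$. These push the problem into the arithmetic of $\Q(\zeta_q)$: the ideal generated by $(x-\zeta_q)/(1-\zeta_q)$ is a perfect $q$-th power, and studying its class in the minus part of the ideal class group by means of Stickelberger's theorem is what yields Mih\v{a}ilescu's first landmark, the double Wieferich condition $p^{q-1}\equiv 1\pmod{q^2}$ and $q^{p-1}\equiv 1\pmod{p^2}$.

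The decisive, and hardest, part is Mih\v{a}ilescu's purely algebraic endgame, which uses no bounds for linear forms in logarithms and no variant of Runge's method. The plan there is to work inside the group ring $\Z[\mathrm{Gal}(\Q(\zeta_p)/\Q)]$, invoke Thaine's theorem to obtain annihilators of the plus part of the class group, and combine them with the Stickelberger ideal to force a suitable cyclotomic integer built from $x-\zeta_p$ to be, on the one hand, a unit times an element of small archimedean size (the size estimate coming from the Cassels relations, which bound $|x|$ in terms of $p$ and $q$) and, on the other hand, an element all of whose conjugates are of comparable and genuinely large size --- an impossibility. I expect this last step to be the true obstacle: it rests on the full semisimple module theory over $\Z[G]$ and admits no elementary shortcut, which is exactly why in the body of the paper we simply appeal to \cite{Mihailescu:2004}.
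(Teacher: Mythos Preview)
Your proposal is correct and matches the paper's approach: both simply cite Mih\u{a}ilescu~\cite{Mihailescu:2004} rather than attempt a self-contained argument, and your architectural sketch is an accurate (if optional) summary of that proof. The only addition in the paper is a remark that the full strength of Catalan is not actually needed---only the special case that $2^x-q^y=\pm 1$ has no solution with $x,y>1$ and $q>3$, which goes back to Hampel~\cite{Hampel:1960}.
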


\begin{remark}
In fact, we do not need the full result on Catalan's equation. We use only that the Diophantine equation $2^x-q^y=\pm 1$ has no solution 
with $x,y>1$ and $q>3$, which was already proved by Hampel \cite{Hampel:1960}.
\end{remark}

\section{The continued fraction approach}\label{Sec:CF}

Throughout this section we assume that $a<b<c<d$. The essential tool proving Theorem~\ref{Th:pq_small} is 

\begin{lemma}\label{lem:CFracMethod}
 Let $B\ge\log d$ and assume that for some real number $\delta>0$ we have
 \[\left|P\log p-Q\log q\right|>\delta\]
 for all convergents $P/Q$ to ${\log q}/{\log p}$ with $Q<{2B}/{\log q}$ and $P<{2B}/{\log p}$. 
 Then
 \[\log d<2B_1+u_q\log q+u_p\log p+\log\left(\frac{2B_1^2}{\log p\log q}\right),\]
 where
 \[B_1=\max\left\{\log\left(\frac{2}{\delta}\right),\log\left(\frac{8B}{\log p \log q}\right)\right\},\]
 and $u_p$ and $u_q$ are defined by $p^{u_p}\|q^{\ord_p(q)}-1$ and $q^{u_q}\|p^{\ord_q(p)}-1$, respectively.
\end{lemma}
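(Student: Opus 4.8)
The plan is to set up the three $S$-unit equations coming from the quadruple, extract from them a tight linear relation between $\log p$ and $\log q$ with small coefficients, and then play this off against the continued-fraction hypothesis. First I would recall that by Lemma~\ref{lem:equal_evaluation} the two smallest exponents in each of the relevant quadruples of $\alpha$'s (and $\beta$'s) coincide, so from the identity $s_3s_4-s_2s_5=s_3+s_4-s_2-s_5$ (and its two cyclic variants) one gets that, after dividing by the appropriate power of $p$ and of $q$, a difference of two $S$-units equals a ``small'' quantity — an integer of size $O(d)$ divided by $p^{\min}q^{\min}$. Concretely I expect an equation of the shape $p^{A}q^{B}-p^{A'}q^{B'}=R$ with $|R|$ bounded polynomially in $d$ (in fact $|R|\le 4d$ or so, comparing the size of the $s_i$ against $d^2$), and with $\max(|A-A'|,|B-B'|)$ controlled by $\log d/\log p$ and $\log d/\log q$. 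Writing this as $p^{a_1}q^{b_1}(p^{a_2}q^{b_2}-1)=R$ with $a_2,b_2$ not both zero gives $|a_2\log p+b_2\log q-\text{(something of size }\log|R|)|$ small, hence $a_2\log p+b_2\log q = O(\log d)$; combined with $a_2,b_2$ being of size $O(\log d)$ this is exactly the input needed to compare with convergents.

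Next I would invoke the theory of continued fractions: if $a_2\log p - (-b_2)\log q$ is small in absolute value while $|a_2|,|b_2|$ are not too large, then (unless $a_2=b_2=0$) the rational $-b_2/a_2$ — or its reduction — must be a convergent, or lie between consecutive convergents, of $\log q/\log p$, with denominator $<2B/\log q$ and numerator $<2B/\log p$ provided $|a_2|<2B/\log p$, $|b_2|<2B/\log q$, which holds once $B\ge\log d$ (after absorbing constants). The hypothesis then says $|a_2\log p+b_2\log q|=|P\log p-Q\log q|>\delta$ for the associated convergent $P/Q$, contradicting smallness — unless $a_2=b_2=0$, i.e.\ the two $S$-units in the difference are actually \emph{equal}. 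So the linear form argument forces equality $s_3s_4=s_2s_5$ (up to the cyclic symmetry), which via Lemma~\ref{lem:equal_evaluation} pins down even more coincidences among the exponents. At that point one has essentially reduced to a genuine two-term $S$-unit equation $p^{u}q^{v}=1\pm(\text{small})$, and here Lemma~\ref{lem:lb1} enters: a high power $q^{v}$ (or $p^{u}$) dividing $p^{x}\mp1$ forces $v-u_q$ (resp.\ the $p$-analogue) to be at most roughly $\log_q$ of the small quantity, i.e.\ bounded by $B_1/\log q$ after taking logs of $2/\delta$ and $8B/(\log p\log q)$. Feeding these bounds on the exponents back into $\log s_i\asymp 2\log d$ and using $d^2\approx \max s_i\le p^{\alpha}q^{\beta}$ with $\alpha\log p+\beta\log q$ now bounded by $2B_1+u_q\log q+u_p\log p+\log(2B_1^2/(\log p\log q))$ yields the stated upper bound for $\log d$.

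The main obstacle I anticipate is bookkeeping rather than a conceptual leap: one has three $S$-unit equations and many subcases according to which pair of exponents is smallest in each (the analogue of ``Table 1'' referenced for the companion paper), and one must check in every branch that the resulting linear form $a_2\log p+b_2\log q$ genuinely has coefficients in the required ranges and genuinely is small — the danger being a branch where the ``small remainder'' $R$ is itself comparable to the $S$-units, so that no nontrivial linear form of controlled height emerges. Dealing with that requires using Lemma~\ref{lem:div} (the divisibility restrictions $\alpha_2\ne\alpha_4$, etc.) to rule out the degenerate alignments, and using Lemma~\ref{lem:low_abc} to discard tiny quadruples where the asymptotic size estimates $s_i\asymp d^2$ fail. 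Once those exclusions are in place the comparison with convergents is clean. A secondary technical point is the precise constant-chasing in passing between ``$P/Q$ is a convergent'' and ``$|P\log p-Q\log q|>\delta$'': one must take $B_1$ exactly as defined so that both the convergent-denominator bound ($\le 2B/\log q$) and the $S$-unit-divisibility bound (coming from $2/\delta$) are simultaneously respected, which is why the $\max$ of the two logarithms appears.
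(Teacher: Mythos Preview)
Your plan assembles the right ingredients but misreads what the continued-fraction step actually delivers. The relevant quantity is the \emph{ratio} $s_1s_6/(s_2s_5)-1=p^{P_1}q^{Q_1}-1$, which is positive and strictly less than $1/(ab)$; taking logarithms gives $|P_1\log p-Q_1\log q|<2/(ab+1)$, a genuinely tiny linear form --- not merely ``$O(\log d)$'' as you write. Comparing this with the hypothesis does \emph{not} force $P_1=Q_1=0$. Either Legendre's criterion applies and $P_1/Q_1$ is a convergent, whence the hypothesis gives $\delta<2/(ab+1)$; or the criterion fails, whence $ab+1<4Q/\log p<8B/(\log p\log q)$. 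In both branches the output is the \emph{bound} $\log(ab+1)<B_1$, and repeating with $s_2s_5/(s_3s_4)-1$ gives $\log(ac+1)<B_1$ as well (this is Lemma~\ref{lem:alpha12_beta12_small}). Your alternative conclusion ``$a_2=b_2=0$, hence $s_3s_4=s_2s_5$'' is in fact impossible --- that equality forces $(a-b)(d-c)=0$ --- so if your dichotomy were correct it would already prove nonexistence of the quadruple, which it cannot do at this stage.

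The substance of the proof begins \emph{after} these bounds on $\alpha_1,\alpha_2,\beta_1,\beta_2$ (hence also $\alpha_4,\beta_4$) are established. One must then determine, for each of the identities $ab\cdot cd=ac\cdot bd$ and $ab\cdot cd=ad\cdot bc$ and $ac\cdot bd=ad\cdot bc$, which pair among the four relevant $\alpha$'s (resp.\ $\beta$'s) is minimal. Lemma~\ref{lem:div} and the ordering $a<b<c<d$ prune some configurations, but several survive and each needs separate treatment: the paper spends two full lemmas ruling out ``$\alpha_1=\alpha_2$ minimal'' and ``$\alpha_2$ minimal'' before the residual case $\alpha_1=\alpha_5$, $\beta_1=\beta_6$ (and its $p\leftrightarrow q$ twin) can be finished. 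Only inside each branch does one reduce an identity modulo a high prime power to extract a divisibility such as $p^{z}\mid q^{\beta_4-\beta_1}-1$ with $z=\min\{\alpha_6-\alpha_1,\alpha_3-\alpha_1\}$, and \emph{then} Lemma~\ref{lem:lb1} bounds $z$ by $u_p+\log(\beta_4-\beta_1)/\log p$, which combined with $\beta_4<2B_1/\log q$ and the already-small $\alpha_1,\beta_6$ yields $\log d<B_2$. Your sketch collapses all of this into ``a two-term $S$-unit equation $p^uq^v=1\pm(\text{small})$'', which is not how the divisibilities arise and does not account for where the terms $u_p\log p+u_q\log q$ and $\log(2B_1^2/(\log p\log q))$ in $B_2$ come from.
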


The aim of this section is to prove Lemma \ref{lem:CFracMethod}. First, we show that under the assumptions of Lemma~\ref{lem:CFracMethod},
$\alpha_1,\alpha_2$ and $\beta_1,\beta_2$ are relatively small.

\begin{lemma}\label{lem:alpha12_beta12_small}
Under the assumptions of Lemma~\ref{lem:CFracMethod}, $\alpha_1,\alpha_2<{B_1}/{\log p}$ and $\beta_1,\beta_2<{B_1}/{\log q}$ follows, where
\[B_1=\max\left\{\log\left(\frac{2}{\delta}\right),\log\left(\frac{8B}{\log p \log q}\right)\right\}.\]
Moreover, $\log(ab+1)<B_1$ and $\log(ac+1)<B_1$ also hold.
\end{lemma}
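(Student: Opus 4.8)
The strategy is to bound every $S$-unit $s_i$ crudely, to extract from the system a very small linear form in $\log p$ and $\log q$ whose smallness is controlled by the size of $ab$ (respectively $ac$), and then to confront this upper bound with the lower bound that the continued fraction hypothesis of Lemma~\ref{lem:CFracMethod} forces on such forms. Since $a<b<c<d$, any product of two of the four integers is at most $cd<d^2$, hence $s_i<d^2\le e^{2B}$ and therefore $\alpha_i<2B/\log p$ and $\beta_i<2B/\log q$ for $i=1,\dots,6$. Computing $abcd$ in two ways gives $s_1s_6-s_3s_4=s_1+s_6-s_3-s_4=(c-a)(d-b)$; as $0<(c-a)(d-b)<cd$ while $s_3s_4>abcd$, the number
\[
\Lambda:=\log(s_1s_6)-\log(s_3s_4)=(\alpha_1+\alpha_6-\alpha_3-\alpha_4)\log p+(\beta_1+\beta_6-\beta_3-\beta_4)\log q
\]
satisfies $0<\Lambda<1/(ab)$, and analogously $s_2s_5-s_3s_4=(d-c)(b-a)$ with $0<(d-c)(b-a)<bd$ yields a companion form $\Lambda'=\log(s_2s_5)-\log(s_3s_4)$ with $0<\Lambda'<1/(ac)$.

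Write $\Lambda=A\log p+\mathcal B\log q$ with $A,\mathcal B\in\Z$. Since $0<\Lambda<1<\min\{\log p,\log q\}$, neither $A$ nor $\mathcal B$ vanishes and they have opposite signs; say $A>0$ and $\mathcal B<0$. One now has to verify that the coefficients lie in the range required by Lemma~\ref{lem:CFracMethod}, i.e.\ $A<2B/\log p$ and $|\mathcal B|<2B/\log q$. Here the quadruple $(\alpha_1,\alpha_3,\alpha_4,\alpha_6)$ is one of the three figuring in Lemma~\ref{lem:equal_evaluation}, so two of its entries equal the minimum, and in the favourable configurations this coincidence, together with the crude bounds above, reduces the four-term combination $\alpha_1+\alpha_6-\alpha_3-\alpha_4$ to a two-term difference bounded by $2B/\log p$; the same is done for $\Lambda'$ using $(\alpha_2,\alpha_3,\alpha_4,\alpha_5)$. \textbf{I expect this to be the main obstacle.} A priori $\alpha_1+\alpha_6-\alpha_3-\alpha_4$ is only bounded by $4B/\log p$, and pushing it down to $2B/\log p$ uniformly calls for a case distinction according to which pair of indices realises the minimum, and in the awkward configurations (both minima on the same side) presumably for a switch to the companion form, or for the additional divisibility restrictions recorded after Lemma~\ref{lem:low_abc}.

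Granting the admissibility of the coefficients, the conclusion follows by a case split on the size of $ab$. If $ab<4B/(\log p\log q)$, then already $\log(ab+1)\le\log(2ab)<\log\bigl(8B/(\log p\log q)\bigr)\le B_1$. If instead $ab\ge4B/(\log p\log q)$, then $\Lambda<1/(ab)\le\log p\log q/(4B)<\log p/(2|\mathcal B|)$, so by Legendre's theorem the fraction $A/|\mathcal B|$, written in lowest terms as $P/Q$, is a convergent of $\log q/\log p$ with $Q\le|\mathcal B|<2B/\log q$ and $P\le A<2B/\log p$; the hypothesis of Lemma~\ref{lem:CFracMethod} then yields $\Lambda\ge|P\log p-Q\log q|>\delta$, hence $1/(ab)>\delta$, i.e.\ $ab<1/\delta$, and therefore $\log(ab+1)\le\log(2ab)<\log(2/\delta)\le B_1$. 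In either case $\log(ab+1)<B_1$, and the same argument applied to $\Lambda'$ gives $\log(ac+1)<B_1$. Finally $\alpha_1\log p\le\log(ab+1)<B_1$ forces $\alpha_1<B_1/\log p$, and likewise $\beta_1<B_1/\log q$, $\alpha_2<B_1/\log p$ and $\beta_2<B_1/\log q$, as claimed.
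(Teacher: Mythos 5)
Your skeleton is essentially the paper's. The authors likewise compute $abcd$ in two ways to get a ratio of $S$-units that exceeds $1$ by less than $1/(ab)$ (they take $s_1s_6/(s_2s_5)-1=(d-a)(c-b)/((ac+1)(bd+1))<1/(ab)$, and for the second bound exactly your companion form $s_2s_5/(s_3s_4)-1<1/(ac)$; your choice of $s_1s_6/(s_3s_4)$ for the first form is an immaterial variation), then apply Legendre's theorem and the hypothesis of Lemma~\ref{lem:CFracMethod} to obtain the same dichotomy $ab+1<2/\delta$ or $ab+1<8B/(\log p\log q)$, from which the exponent bounds follow as in your last sentences. Your treatment of the reduction to lowest terms in Legendre's step and your split on the size of $ab$ are fine. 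One small slip: the justification ``$0<\Lambda<1<\min\{\log p,\log q\}$'' is false for $p=2$, which is the main case of the paper; argue instead with $\Lambda<1/(ab)\le 1/3<\log 2$ via Lemma~\ref{lem:low_abc}, which is exactly what the paper does.

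The step you yourself flag as the main obstacle --- that $A<2B/\log p$ and $|\mathcal B|<2B/\log q$ --- is genuinely needed and is not established in your write-up, so the proposal is incomplete precisely there: in your second branch you cannot invoke the continued-fraction hypothesis (nor the inequality $\Lambda<\log p/(2|\mathcal B|)$) without it. The crude bounds only give each exponent $<2B/\log(\cdot)$, hence $|A|\le 4B/\log p$, and Lemma~\ref{lem:equal_evaluation} closes the gap only when the coinciding minimal pair of $(\beta_1,\beta_3,\beta_4,\beta_6)$ (respectively of the $\alpha$'s) is ``mixed'', one index from $\{1,6\}$ and one from $\{3,4\}$, so that it cancels in the four-term sum and leaves a difference of two single exponents; when the minimal pair is $\beta_1=\beta_6$ or $\beta_3=\beta_4$ no cancellation occurs and an additional argument is required, which you only gesture at (``presumably a switch to the companion form\dots''). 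To be fair, the paper's own proof is equally terse at this point: it simply writes ``by the conditions of Lemma~\ref{lem:CFracMethod}'' and uses $Q<2B/\log q$ (and, in the convergent branch, that $P,Q$ lie in the admissible range) without justification. So you have correctly isolated the one delicate point that the published argument glosses over, but flagging it is not filling it, and as submitted your proof of Lemma~\ref{lem:alpha12_beta12_small} has a gap at exactly that step.
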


\begin{proof}
 Put $P_1=\alpha_1+\alpha_6-\alpha_2-\alpha_5$ and $Q_1=\beta_1+\beta_6-\beta_2-\beta_5$. Then 
 \[\tilde S:=p^{P_1}q^{Q_1}-1=\frac{s_1s_6}{s_2s_5}-1=\frac{(d-a)(c-b)}{(ac+1)(bd+1)}<\frac{(d-a)(c-b)}{abcd}<\frac 1{ab}.\]
 By Lemma \ref{lem:low_abc}, $0<\tilde S<1/3$. Therefore the signs of $P_1$ and $Q_1$ are opposite. Put $P:=|P_1|$, $Q:=|Q_1|$. Note that due to 
a Taylor expansion of order two, together with the Lagrange remainder we see
$$|\log(1+x)|\leq \frac{31 |x|}{24},$$
provided $|x|\leq 1/3$. Taking logarithms we obtain
 \[|P\log p-Q\log q|<\frac{31}{24(p^{\alpha_1}q^{\beta_1}-1)}<\frac{2}{p^{\alpha_1}q^{\beta_1}},\]
and then
\begin{equation}\label{Leg1}
\left| \frac{P}{Q}-\frac{\log q}{\log p}\right|<\frac{2}{Q(\log p) p^{\alpha_1}q^{\beta_1}}.
\end{equation}
Obviously, $\log q/\log p$ is irrational. 

If the right hand side of \eqref{Leg1} is smaller than $1/(2Q^2)$, then, according to Legendre's theorem, $P/Q$ is a convergent of the ratio of the logarithms 
of 
$q$ and $p$. Therefore $\delta<2/(p^{\alpha_1}q^{\beta_1})$, and we conclude $p^{\alpha_1}q^{\beta_1}<2/\delta$. Subsequently, $\alpha_1<B_1/\log p$ and 
$\beta_1<B_1/\log q$ with $B_1=\log(2/\delta)$. Also, $\log(ab+1)<\log(2/\delta)$. 

The case
\[\frac{2}{Q(\log p) p^{\alpha_1}q^{\beta_1}}>\frac{1}{2Q^2}\]
implies $p^{\alpha_1}q^{\beta_1}<4Q/\log p$. Thus, by the conditions of Lemma \ref{lem:CFracMethod},
$ab+1=p^{\alpha_1}q^{\beta_1}<{8B}/(\log p \log q)$,
and 
\[\log(ab+1)<\log\left(\frac{8B}{\log p \log q}\right).\]
Now, $\alpha_1<B_1/\log p$ and $\beta_1<B_1/\log q$.

Repeating the same machinery for $(s_2s_5)/(s_3s_4)-1$, it yields the app\-rop\-ri\-ate bounds for $\alpha_2$ and $\beta_2$.
\end{proof}

\begin{corollary}
Under the assumptions and notations of Lemma \ref{lem:alpha12_beta12_small} we have $\alpha_4< 2B_1/\log p$ and $\beta_4<2B_1/\log q$.
\end{corollary}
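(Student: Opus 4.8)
The plan is to bound $\alpha_4$ and $\beta_4$ by combining the bounds just obtained on $\alpha_1,\alpha_2,\beta_1,\beta_2$ (equivalently on $\log(ab+1)$ and $\log(ac+1)$) with an elementary size estimate relating $s_4=bc+1$ to $s_1=ab+1$ and $s_2=ac+1$. First I would observe that since $a<b<c$, we have $bc+1 < (ab+1)(ac+1)$: indeed $(ab+1)(ac+1)=a^2bc+ab+ac+1>bc+1$ because $a\ge 1$ and the extra terms $a^2bc-bc+ab+ac\ge 0$ (in fact strictly positive). Hence
\[
\log(bc+1)<\log(ab+1)+\log(ac+1)<2B_1,
\]
using Lemma~\ref{lem:alpha12_beta12_small} for the last inequality.

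Next, from $bc+1=2^{\alpha_4}q^{\beta_4}$ (or $p^{\alpha_4}q^{\beta_4}$ in the general $\{p,q\}$ setting) we immediately read off
\[
\alpha_4\log p\le \alpha_4\log p+\beta_4\log q=\log(bc+1)<2B_1,
\]
so $\alpha_4<2B_1/\log p$, and symmetrically $\beta_4\log q\le\log(bc+1)<2B_1$ gives $\beta_4<2B_1/\log q$. This is the entire argument; no continued fractions or $S$-unit equations are needed beyond what has already been set up, since the work of controlling $s_1$ and $s_2$ was done in the preceding lemma.

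There is essentially no obstacle here — the only thing to be slightly careful about is the trivial size inequality $bc+1<(ab+1)(ac+1)$, which needs $a\ge 1$ (true) and benefits from Lemma~\ref{lem:low_abc} only insofar as it guarantees the quantities are genuinely large, though the inequality holds regardless. One should also note that the corollary as stated suppresses the dependence on which prime is which; in the $S=\{2,q\}$ application $p=2$, and the bound $\alpha_4<2B_1/\log 2$ is exactly what later sections will feed into Lemma~\ref{lem:lb1} to pin down the admissible exponents.
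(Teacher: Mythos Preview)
Your proof is correct and follows essentially the same approach as the paper: both use the elementary inequality $bc+1<(ab+1)(ac+1)=a^2bc+ab+ac+1$ to get $\log(bc+1)<2B_1$ from Lemma~\ref{lem:alpha12_beta12_small}, and then read off the bounds on $\alpha_4$ and $\beta_4$ directly from $\alpha_4\log p\le\log(bc+1)$ and $\beta_4\log q\le\log(bc+1)$.
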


\begin{proof}
The corollary is an immediate consequence of Lemma \ref{lem:alpha12_beta12_small}, since 
$\alpha_4 \log p \leq \log(bc+1)$, $\beta_4 \log q \leq \log(bc+1)$ and 
$$\log(bc+1)< \log(a^2bc+ab+ac+1)= \log(ab+1)+\log(ac+1)< 2B_1.$$
\end{proof}

In the next step we prove that $\alpha_1=\alpha_2$ cannot be the minimum among $\alpha_1,\alpha_2,\alpha_5$ and $\alpha_6$. Similarly, $\beta_1=\beta_2$ 
is not the minimal pair in the set $\{\beta_1,\beta_2,\beta_5,\beta_6\}$.

\begin{lemma}
Assume that 
\[\log d>B_2:=2B_1+u_q\log q+u_p\log p+\log\left(\frac{2B_1^2}{\log p\log q}\right).\]
Then neither $\alpha_1=\alpha_2\leq \alpha_5,\alpha_6$ nor $\beta_1=\beta_2\leq \beta_5,\beta_6$ holds.
\end{lemma}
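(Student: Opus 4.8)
We argue by contradiction. Suppose $\alpha_1=\alpha_2\le\alpha_5,\alpha_6$. By Lemma~\ref{lem:alpha12_beta12_small} we know $\alpha_1=\alpha_2<B_1/\log p$, so $\alpha_5$ and $\alpha_6$ are $\ge\alpha_1$ but not otherwise controlled yet. The key observation is that, since $s_1=ab+1$ and $s_2=ac+1$ share the same $p$-exponent, the difference $s_2-s_1=a(c-b)$ is divisible by $p^{\alpha_1}$; we will push this divisibility much further. The plan is to combine the two congruences $p^{\alpha_1}q^{\beta_1}=ab+1$ and $p^{\alpha_2}q^{\beta_2}=ac+1$ (with $\alpha_1=\alpha_2$) to show that a high power of $p$ divides $ad+1$ or $bd+1$ (whichever of $s_3,s_5$ carries the larger $p$-exponent), and then invoke Lemma~\ref{lem:lb1} to turn that divisibility into a lower bound on $d$ of the shape $\log d\ge(\text{something})\,\log p$, which under the hypothesis $\log d>B_2$ we can push past $B_2$ only if $\alpha_5$ or $\alpha_6$ itself is large; iterating, we reach a contradiction.

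\textbf{Key steps.} First I would write $P:=\alpha_1+\alpha_6-\alpha_2-\alpha_5=\alpha_6-\alpha_5$ and $Q:=\beta_1+\beta_6-\beta_2-\beta_5$, exactly as in the proof of Lemma~\ref{lem:alpha12_beta12_small}, so that $p^{P}q^{Q}-1=s_1s_6/(s_2s_5)-1=(d-a)(c-b)/((ac+1)(bd+1))$, a nonzero rational of absolute value $<1/(ab)\le 1/3$. Second, under the assumption $\alpha_1=\alpha_2$ the quantity $P=\alpha_6-\alpha_5$ and $Q=\beta_1+\beta_6-\beta_2-\beta_5$; the hypothesis $\alpha_1=\alpha_2\le\alpha_5,\alpha_6$ forces $P\ge0$ and, combined with $\beta_1=\beta_2$ likewise minimal in the sister statement, forces the sign of $Q$ to be determined as well, so we obtain $p^{|P|}\mid$ (numerator of $p^Pq^Q-1$), i.e.\ $p^{|P|}$ divides $q^{|Q|}\mp1$. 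Third, apply Lemma~\ref{lem:lb1} with the roles of $p,q$ interchanged: writing $p^{u_p}\|q^{\ord_p(q)}-1$, the divisibility $p^{|P|}\mid q^{|Q|}-1$ yields $\ord_p(q)\,p^{|P|-u_p}\mid |Q|$, hence $|Q|\ge p^{|P|-u_p}$, so $\beta_6\le \beta_1+\beta_6\approx |Q|+\beta_2+\beta_5$ is forced to be at least exponentially large in $|P|=\alpha_6-\alpha_5$ unless $\alpha_6-\alpha_5$ is bounded by roughly $u_p$. Fourth, feed the resulting bound into the trivial inequality $\log d\le\log(cd+1)=\alpha_6\log p+\beta_6\log q$ together with $c\ge\sqrt{d}$-type estimates coming from $cd+1=s_6$ and $bc+1=s_4$, and show the whole thing is $\le 2B_1+u_q\log q+u_p\log p+\log(2B_1^2/(\log p\log q))=B_2$, contradicting $\log d>B_2$. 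The symmetric statement for the $\beta$'s is proved the same way with $p$ and $q$ exchanged.

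\textbf{Main obstacle.} The delicate point is the bookkeeping of the exponents $\alpha_5,\alpha_6,\beta_5,\beta_6$: Lemma~\ref{lem:alpha12_beta12_small} and its corollary bound $\alpha_1,\alpha_2,\alpha_4$ (and the $\beta$'s) but say nothing about $\alpha_3,\alpha_5,\alpha_6,\beta_3,\beta_5,\beta_6$, so I cannot directly assert that any of these is small. The argument must instead be self-contained: from $\alpha_1=\alpha_2$ being minimal one shows $p^{\alpha_5}\mid bd+1$ and $p^{\alpha_6}\mid cd+1$ with $\alpha_5,\alpha_6\ge\alpha_1$, then one of $ad+1$ or $bd+1$ (the larger-valuation one, which we may call $s_j$) must satisfy a divisibility $p^{\alpha_j}\mid (\text{explicit product})$, and the real work is checking that $\alpha_j$ cannot be both $\ge\alpha_1$ and so large that $\log d\le B_2$ fails — this is where Lemma~\ref{lem:lb1} (in the $q=2$ case especially, where the $p\equiv1\pmod 4$ branch behaves differently) has to be applied with care, and where the precise constant $u_p\log p+u_q\log q$ in $B_2$ comes from. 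I expect this divisibility-to-lower-bound conversion, and verifying it survives the worst case $q=2$, to be the part requiring the most attention.
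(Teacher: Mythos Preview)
Your proposal has a genuine gap at the heart of the argument. You write that under $\alpha_1=\alpha_2$ one has $P=\alpha_6-\alpha_5$ and then claim ``the hypothesis $\alpha_1=\alpha_2\le\alpha_5,\alpha_6$ forces $P\ge0$'' and that ``$p^{|P|}$ divides $q^{|Q|}\mp1$.'' Neither of these follows. Knowing only $\alpha_5,\alpha_6\ge\alpha_1$ tells you nothing about the sign of $\alpha_6-\alpha_5$. More seriously, the smallness of $p^Pq^Q-1$ (an inequality) does not produce the divisibility relation $p^{|P|}\mid q^{|Q|}\mp1$; the numerator of $p^Pq^Q-1$ is $p^{|P|}-q^{|Q|}$ (up to sign), and there is no reason a priori for $p^{|P|}$ to divide it. You also try to use ``$\beta_1=\beta_2$ likewise minimal,'' but that is not part of the hypothesis here---in fact $\beta_2$ minimal would give $ac+1\le ab+1$, impossible.

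The paper's proof is structurally different and more elaborate. It first invokes Lemma~\ref{lem:equal_evaluation} and Lemma~\ref{lem:div} to pin down which pair among $\beta_1,\beta_2,\beta_5,\beta_6$ is minimal: since $\beta_2$ and $\beta_5=\beta_6$ are excluded, one is forced into the two sub-cases $\beta_1=\beta_5$ or $\beta_1=\beta_6$. In each sub-case the paper writes out the equation $ab\cdot cd=ac\cdot bd$ \emph{after cancelling the common $p$- and $q$-powers}, reads off an exact divisibility of the form $p^{z}\mid 1-q^{\beta_2-\beta_j}$ with $z=\min\{\alpha_6-\alpha_1,\alpha_5-\alpha_2\}$, and only then applies Lemma~\ref{lem:lb1}. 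This bounds one of $\alpha_5,\alpha_6$; the argument then cascades through the other equations $ac\cdot bd=ad\cdot bc$ and $ab\cdot cd=ad\cdot bc$, each time using Lemma~\ref{lem:equal_evaluation} to locate the minimal pair and Lemma~\ref{lem:div} to rule out degenerate configurations, until both $\alpha$- and $\beta$-exponents of some $s_i$ with $i\in\{3,5,6\}$ are small enough to force $\log d<B_2$. Your sketch bypasses this cascade and the repeated use of Lemmas~\ref{lem:equal_evaluation} and~\ref{lem:div}, which is where the real work lies.
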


\begin{proof}
By switching the roles of $p$ and $q$ if necessary, we may assume that $\alpha_1=\alpha_2\leq \alpha_5,\alpha_6$. On the other hand $\beta_2$ cannot be the 
minimum of the 
$\beta$'s otherwise it yields $ac<ab$. By Lemma~\ref{lem:div} we exclude the minimality of $\beta_5=\beta_6$. Since a sole 
minimum does not exist we conclude that either $\beta_1=\beta_5$ or $\beta_1=\beta_6$.

First consider the case when $\alpha_1=\alpha_2$ and $\beta_1=\beta_5$ are minimal. By cancelling equal powers, the equation 
$ab\cdot cd=ac\cdot bd$ turns into
\[p^{\alpha_6}q^{\beta_6}-1-p^{\alpha_6-\alpha_1}q^{\beta_6-\beta_1}=p^{\alpha_5}q^{\beta_2}-q^{\beta_2-\beta_5}-p^{\alpha_5-\alpha_2},\]
which gives $p^z|1-q^{\beta_2-\beta_5}$, where $z=\min\{\alpha_6-\alpha_1,\alpha_5-\alpha_2\}$. Assume for the moment that 
$\alpha_5-\alpha_2=z$. Hence, by Lemma~\ref{lem:lb1}, $p^{\alpha_5-\alpha_2-u_p}\leq B_1/\log q$ follows. Then, together with the bound for $\alpha_1$ and 
$\alpha_2$ provided by Lemma~\ref{lem:alpha12_beta12_small}, it implies
\[\alpha_5< \frac{B_1}{\log p}+u_p+\frac{\log\left(\frac{B_1}{\log q}\right)}{\log p}.\]
Consequently, $\beta_5=\beta_1\leq \frac{B_1}{\log q}$ yields $\log d< B_2$. Therefore $z=\alpha_6-\alpha_1$. As before, we obtain the upper bound
\[\alpha_6< \frac{B_1}{\log p}+u_p+\frac{\log\left(\frac{B_1}{\log q}\right)}{\log p}.\]
In order to show that $\alpha_1=\alpha_2=\alpha_3$ and $\beta_1=\beta_4=\beta_5$ are minimal among all $\alpha$'s and $\beta$'s, respectively, we consider the 
equation 
$$ac\cdot bd=(p^{\alpha_2}q^{\beta_2}-1)(p^{\alpha_5}q^{\beta_5}-1)=(p^{\alpha_3}q^{\beta_3}-1)(p^{\alpha_4}q^{\beta_4}-1)=ad\cdot bd,$$
and determine the minimal pairs among $\alpha_2,\alpha_3,\alpha_4, \alpha_5$ and $\beta_2,\beta_3,\beta_4, \beta_5$, 
res\-pectively. By the assumption $\log d>2B_1$, together with $\beta_5\leq B_1/\log q$, we deduce that $\alpha_5>B_1/\log 
p>\alpha_2$. Hence, obviously, $\alpha_5$ is not minimal. So we obtain either $\alpha_2=\alpha_3< B_1/\log p$ or $\alpha_3=\alpha_4< \alpha_2< B_1/\log 
p$. In any case $\alpha_3<B_1/\log p$ follows, and since $\log d>2B_1$ we get $\beta_3>B_1/\log q>\beta_2$. Thus $\beta_3$ cannot be minimal. Therefore we 
conclude $\beta_1=\beta_5=\beta_4$. When $\alpha_3=\alpha_4\leq \alpha_2$ holds we arrive at a contradiction by $ac>bc$. Subsequently, 
$\alpha_1=\alpha_2=\alpha_3$ and $\beta_1=\beta_5=\beta_4$ are minimal among all $\alpha$'s and $\beta$'s, respectively. 

Next we consider the equation $ab\cdot cd=ad\cdot bc$, which leads to
\[p^{\alpha_6}q^{\beta_6}-1-p^{\alpha_6-\alpha_1}q^{\beta_6-\beta_1}=p^{\alpha_4}q^{\beta_3}-p^{\alpha_4-\alpha_1}-q^{\beta_3-\beta_1}.\]
Therefore $1-p^{\alpha_4-\alpha_1}|q^{\min\{\beta_6-\beta_1,\beta_3-\beta_1\}}$, and the application of Lemma~\ref{lem:lb1} yields
\[q^{\min\{\beta_6-\beta_1,\beta_3-\beta_1\}-u_q}\leq \alpha_4-\alpha_1<\frac{2B_1}{\log p}.\]
This gives an upper bound either for $\beta_3$ or for $\beta_6$, namely
\[\min\{\beta_3,\beta_6\}< \frac{B_1}{\log q}+u_q+\frac{\log\left(\frac{B_1}{\log p}\right)}{\log q}. \]
From the bounds for $\alpha_3$ and $\alpha_6$ we obtain (in any case) the contradiction
\[B_2<\log d< 2B_1+u_q\log q+u_p\log p+\log\left(\frac{2B_1^2}{\log p\log q}\right)=B_2.\]

The case $\alpha_1=\alpha_2$ and $\beta_1=\beta_6$ can be treated similarly. This time the equation $ab\cdot cd=ac\cdot bd$ admits
\[p^{\alpha_6}q^{\beta_1}-1-p^{\alpha_6-\alpha_1}=p^{\alpha_5}q^{\beta_2+\beta_5-\beta_1}-q^{\beta_2-\beta_1}-p^{\alpha_5-\alpha_2}q^{\beta_5-\beta_1},\]
which implies $p^z|1-q^{\beta_2-\beta_1}$, where $z=\min\{\alpha_6-\alpha_1,\alpha_5-\alpha_2\}$. The case $z=\alpha_6-\alpha_1$ cannot 
hold since $\alpha_6\le\alpha_5$ and $\beta_6\leq \beta_5$ contradict $bd<cd$. Therefore $z=\alpha_5-\alpha_2$, and by Lemma~\ref{lem:lb1} we obtain  the upper 
bound
\begin{equation}\label{ieq:alpha5} \alpha_5< \frac{B_1}{\log p}+u_p+\frac{\log\left(\frac{B_1}{\log q}\right)}{\log p}.\end{equation}
Consider the equation 
$$ac\cdot bd=(p^{\alpha_2}q^{\beta_2}-1)(p^{\alpha_5}q^{\beta_5}-1)=(p^{\alpha_3}q^{\beta_3}-1)(p^{\alpha_4}q^{\beta_4}-1)=ad\cdot bd$$
to see which pairs are minimal in the sets $\{\alpha_2,\alpha_3,\alpha_4, \alpha_5\}$ and 
$\{\beta_2,\beta_3,\beta_4, \beta_5\}$, respectively. Note that $\alpha_2$ is an element of the minimal pair in the set $\{\alpha_2,\alpha_3,\alpha_4, 
\alpha_5\}$. By 
inequality \eqref{ieq:alpha5} we deduce $\beta_5>B_1/\log q$, otherwise we obtain $\log d <B_2$. Similarly, 
$\beta_6=\beta_1<B_1/\log q$ yields $\alpha_6>B_1/\log p$.
Thus neither $\beta_5$ 
nor $\alpha_6$, but either $\beta_2=\beta_3$ or $\beta_3=\beta_4<\beta_2$ are minimal. However, in any case we have $\beta_3\leq 
\beta_2<B_1/\log q$ and furthermore we have $\alpha_3>B_1/\log p$, otherwise we get $\log d<2B_1<B_2$. Subsequently, $\alpha_3$ cannot be minimal. Since $\alpha_2=\alpha_4$ yields a contradiction to Lemma \ref{lem:div} (see the discussion after the lemma) we may also exclude the minimality of  
$\alpha_4$. Finally, we conclude $\alpha_1=\alpha_2=\alpha_5<\alpha_4$. Now take the equation 
$$ab\cdot cd=(p^{\alpha_1}q^{\beta_1}-1)(p^{\alpha_6}q^{\beta_6}-1)=(p^{\alpha_3}q^{\beta_3}-1)(p^{\alpha_4}q^{\beta_4}-1) =ad\cdot bc$$
and determine the minimal pair among $\alpha_1,\alpha_3,\alpha_4,\alpha_6$. Obviously, $\alpha_1$ is included in the minimal pair. But, neither  
$\alpha_3$ nor $\alpha_4$ nor $\alpha_6$ is minimal since we obtain either $\alpha_1=\alpha_2=\alpha_5=\alpha_3$ or $\alpha_1=\alpha_2=\alpha_5=\alpha_6$ 
or $\alpha_1=\alpha_2=\alpha_4$. Observe, that all three cases are excluded by Lemma~\ref{lem:div}.
\end{proof}

\begin{lemma}
Let $\log d>B_2$. Then neither $\alpha_2\leq \alpha_1,\alpha_5,\alpha_6$ nor $\beta_2\leq \beta_1,\beta_5,\beta_6$ holds.
\end{lemma}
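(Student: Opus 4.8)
The plan is to follow the scheme of the previous lemma. By switching the roles of $p$ and $q$ if necessary, I would assume $\alpha_2\le\alpha_1,\alpha_5,\alpha_6$ and derive a contradiction from $\log d>B_2$. By Lemma~\ref{lem:equal_evaluation} the two smallest among $\alpha_1,\alpha_2,\alpha_5,\alpha_6$ coincide; since $\alpha_5\ne\alpha_6$ (Lemma~\ref{lem:div}, using $a<b<c<d$) and the previous lemma rules out $\alpha_1=\alpha_2\le\alpha_5,\alpha_6$, the minimal pair is $\{\alpha_2,\alpha_5\}$ or $\{\alpha_2,\alpha_6\}$, and in particular $\alpha_1>\alpha_2$. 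On the other hand $\alpha_2\le\alpha_1$ together with $s_2=ac+1>ab+1=s_1$ forces $\beta_1<\beta_2$, so $\beta_2$ is not among the two smallest of $\beta_1,\beta_2,\beta_5,\beta_6$; as $\beta_5\ne\beta_6$, the minimal pair there is $\{\beta_1,\beta_5\}$ or $\{\beta_1,\beta_6\}$. This leaves four combinations.

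Two of them are ``aligned'' and collapse immediately. If $\alpha_2=\alpha_5$ and $\beta_1=\beta_5$, then $s_5=p^{\alpha_5}q^{\beta_5}=p^{\alpha_2}q^{\beta_1}\le p^{\alpha_1}q^{\beta_1}=s_1$, i.e.\ $bd+1\le ab+1$, contradicting $a<d$. If $\alpha_2=\alpha_6$ and $\beta_1=\beta_6$, then $s_6=p^{\alpha_6}q^{\beta_6}=p^{\alpha_2}q^{\beta_1}\le p^{\alpha_1}q^{\beta_1}=s_1$, i.e.\ $cd+1\le ab+1$, which is absurd since $c>a$ and $d>b$.

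There remain the two ``crossed'' cases $\{\alpha_2=\alpha_5,\ \beta_1=\beta_6\}$ and $\{\alpha_2=\alpha_6,\ \beta_1=\beta_5\}$. In each, I would first fix the ordering of the exponents by comparing suitable quotients of $S$-units: for instance in the first case $s_6/s_1=p^{\alpha_6-\alpha_1}>1$ gives $\alpha_6>\alpha_1$ and $s_2/s_5=q^{\beta_2-\beta_5}<1$ gives $\beta_2<\beta_5$, so that $\alpha_2=\alpha_5\le\alpha_1<\alpha_6$ and $\beta_1=\beta_6<\beta_2<\beta_5$; the second case is symmetric, with $\alpha_5>\alpha_1$ and $\beta_6>\beta_2$. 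Thus the exponents not already bounded through Lemma~\ref{lem:alpha12_beta12_small} and its Corollary are only $\alpha_6,\beta_5$ (resp.\ $\alpha_5,\beta_6$) and, a priori, $\alpha_3,\beta_3$. Then, exactly as in the previous lemma, I would cancel the common power from one of the product relations $ab\cdot cd=ac\cdot bd$, $ab\cdot cd=ad\cdot bc$, $ac\cdot bd=ad\cdot bc$ and take $p$-adic or $q$-adic valuations, obtaining a divisibility of the shape $p^{z}\mid q^{w}\pm1$ or $q^{z}\mid p^{w}\pm1$ with $z$ the minimum of two differences of exponents. When that minimum is a large exponent minus a small one and $w$ is a difference of two exponents each bounded by $B_1/\log p$ or $B_1/\log q$, Lemma~\ref{lem:lb1} yields $p^{\,z-u_p}\le w$ (resp.\ $q^{\,z-u_q}\le w$), which bounds the large exponent and hence forces $\log d\le\log s_i<2B_1+u_q\log q+u_p\log p+\log\!\big(2B_1^2/(\log p\log q)\big)=B_2$, a contradiction. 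In the complementary sub-case the minimum comes from a difference of already bounded exponents; there I would first locate the global minimal pairs among all six $\alpha$'s and all six $\beta$'s by applying Lemma~\ref{lem:equal_evaluation} to the quadruples $(\alpha_2,\alpha_3,\alpha_4,\alpha_5)$ and $(\alpha_1,\alpha_3,\alpha_4,\alpha_6)$ (and the analogous $\beta$-quadruples) together with Lemma~\ref{lem:div}, thereby gaining control of $\alpha_3$ and $\beta_3$, and then extract the needed bound from a second product relation, again reaching $B_2<\log d<B_2$.

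I expect the main obstacle to be exactly this last step. As the authors note, the $b\leftrightarrow c$ (equivalently $p\leftrightarrow q$) symmetry that shortened the analogous discussion in \cite{Szalay:2013a} is not available here, so both crossed cases and, inside each of them, both sub-cases of where the minimum defining $z$ lies have to be carried out separately. The delicate bookkeeping concerns $\alpha_3$ and $\beta_3$, the exponents of $ad+1$, which Lemma~\ref{lem:alpha12_beta12_small} does not bound; and at every use of Lemma~\ref{lem:lb1} one must check that the power being divided has positive exponent and that $z\ge u_p$ (resp.\ $z\ge u_q$), the remaining situations being settled directly by size comparisons among the $s_i$.
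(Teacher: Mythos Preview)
Your plan is sound and matches the paper's proof: the same two aligned cases dispatched by size, then the crossed cases handled via the product identities, minimal-pair bookkeeping with Lemmas~\ref{lem:equal_evaluation} and~\ref{lem:div}, and Lemma~\ref{lem:lb1}. One helpful remark: the second crossed case ($\alpha_2=\alpha_6$, $\beta_1=\beta_5$) turns out not to require Lemma~\ref{lem:lb1} or the bound $B_2$ at all---applying Lemma~\ref{lem:equal_evaluation} to $(\alpha_1,\alpha_3,\alpha_4,\alpha_6)$ and $(\beta_1,\beta_3,\beta_4,\beta_6)$ together with the exclusions from Lemma~\ref{lem:div} forces $\alpha_3=\alpha_4<\alpha_6<\alpha_1$ and $\beta_1=\beta_4=\beta_5$, whence $bc+1<ab+1$, so the delicate work you anticipate is confined to the first crossed case.
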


\begin{proof}
Because of the symmetry between $p$ and $q$ it is enough to show that $\alpha_2$ cannot be the minimum. Contrary, assume that 
$\alpha_2$ is the smallest one. Thus $\alpha_2=\alpha_5$ or $\alpha_2=\alpha_6$ since the case $\alpha_1=\alpha_2$ was excluded by the previous lemma. 
Clearly, $ac<ab$ guarantees that $\beta_2$ cannot be minimal among the $\beta$'s. 
Therefore we have the two possibilities 
$\alpha_2=\alpha_5$ and $\beta_1=\beta_6$ or $\alpha_2=\alpha_6$ and $\beta_1=\beta_5$. 
Indeed, the co-minimality of $\alpha_5$ and $\beta_5$ gives $bd<ab$, a contradiction. A similar argument is true for $\alpha_6$ and $\beta_6$ with $cd<ab$.
Moreover, $\beta_5=\beta_6$ contradicts Lemma~\ref{lem:div} applied to the triple $(b,c,d)$.

First, we treat the case $\alpha_2=\alpha_5$ and $\beta_1=\beta_6$. Therefore consider the equation 
$$ab\cdot cd=(p^{\alpha_1}q^{\beta_1}-1)(p^{\alpha_6}q^{\beta_6}-1)=(p^{\alpha_3}q^{\beta_3}-1)(p^{\alpha_4}q^{\beta_4}-1) =ad\cdot bc$$
to find the minimal pair among $\alpha_1,\alpha_3,\alpha_4,\alpha_6$ and $\beta_1,\beta_3,\beta_4,\beta_6$, respectively. Since $\beta_1=\beta_6<B_1/\log q$ and 
$\alpha_2=\alpha_5<B_1/\log p$ we have $\alpha_6>B_1/\log p$ and $\beta_5>B_1/\log q$, otherwise $\log d <2B_1<B_2$ hold. Therefore neither $\alpha_6$ nor $\beta_5$ is minimal. 
When $\alpha_3$ possesses the minimal property we have 
$\alpha_2=\beta_5<B_1/\log p$ and then $\beta_3>B_1/\log q$. In view of the equation 
$$ac\cdot bd=(p^{\alpha_2}q^{\beta_2}-1)(p^{\alpha_5}q^{\beta_5}-1)=(p^{\alpha_3}q^{\beta_3}-1)(p^{\alpha_4}q^{\beta_4}-1) =ad \cdot bc$$ 
neither $\beta_3$ nor $\beta_5$ can be minimal among $\beta_2,\beta_3,\beta_4$ and $\beta_5$. Thus $\beta_2=\beta_4$, but this is a 
contradiction to Lemma \ref{lem:div} (see the discussion below Lemma \ref{lem:div}). Consequently, $\alpha_3$ is not the minimum, therefore we get 
$\alpha_1=\alpha_4$. 
After simplifying the equation $ab\cdot cd=ad\cdot bc$ we obtain 
\[p^{\alpha_6}q^{\beta_6}-1-p^{\alpha_6-\alpha_1}=p^{\alpha_3}q^{\beta_3+\beta_4-\beta_1}-q^{\beta_4-\beta_1}-p^{\alpha_3-\alpha_1}q^{\beta_3-\beta_1}.\]
Therefore $p^z|q^{\beta_4-\beta_1}-1$, where $z=\min\{\alpha_6-\alpha_1,\alpha_3-\alpha_1\}$. Then, due to Lemma~\ref{lem:lb1}, 
\[p^{z-u_p}\leq \beta_4-\beta_1\leq 2B_1.\]
Assuming $z=\alpha_6-\alpha_1$, it leads to a contradiction. Indeed,
\[\alpha_6\leq \frac{B_1}{\log p}+u_p+\frac{\log\left(\frac{B_1}{\log q}\right)}{\log p}\]
holds, and together with  $\beta_1=\beta_6<B_1/\log q$ we obtain $\log d< \log (cd+1)<B_2$. Therefore we may assume that $z=\alpha_3-\alpha_1$ and we obtain
\[\alpha_3\leq \frac{B_1}{\log p}+u_p+\frac{\log\left(\frac{B_1}{\log q}\right)}{\log p}.\]
Since $\alpha_3$ is ``small'' we deduce that $\beta_3$ is ``large'', i.e. $\beta_3>B_1/\log q$, otherwise $\log d<B_2$ follows. Thus 
$\beta_3$ cannot be minimal. Consider the equation 
$$ac\cdot bd= (p^{\alpha_2}q^{\beta_2}-1)(p^{\alpha_5}q^{\beta_5}-1)=(p^{\alpha_3}q^{\beta_3}-1)(p^{\alpha_4}q^{\beta_4}-1)=ad\cdot bc$$
and determine which pair is minimal among $\beta_2,\beta_3,\beta_4,\beta_5$. 
By the discussion above, both $\beta_3$ and $\beta_5$ are not minimal and therefore $\beta_2=\beta_4$ is. But this is a contradiction to Lemma~\ref{lem:div} 
applied to the triple $(a,b,c)$.

Now let us turn to the case $\alpha_2=\alpha_6$ and $\beta_1=\beta_5$. We consider the equation 
$$ab\cdot cd=(p^{\alpha_1}q^{\beta_1}-1)(p^{\alpha_6}q^{\beta_6}-1)=(p^{\alpha_3}q^{\beta_3}-1)(p^{\alpha_4}q^{\beta_4}-1)=ad \cdot bc$$
in order to determine which pair among 
$\alpha_1,\alpha_3,\alpha_4,\alpha_6$ and $\beta_1,\beta_3,\beta_4,\beta_6 $ is minimal, respectively. Due to Lemma~\ref{lem:alpha12_beta12_small}, $\alpha_1$ 
cannot be minimal. Therefore we deduce that $\alpha_3=\alpha_4<\alpha_2=\alpha_6\leq\alpha_1$ since all other cases yield contradictions to Lemma~\ref{lem:div}. 
Concerning the $\beta$'s, if $\beta_1$ is not minimal, then $ab$ is larger than $ad,bc$ or $cd$. Therefore $\beta_1$ is minimal and we deduce 
$\beta_1=\beta_4=\beta_5$, since all other cases yield contradictions to Lemma~\ref{lem:div}. All together we have $\beta_4=\beta_1$ and $\alpha_4<\alpha_1$, 
which provide $bc<ab$, again a contradiction.
\end{proof}

Now we are ready to prove Lemma~\ref{lem:CFracMethod}

\begin{proof}[Proof of Lemma \ref{lem:CFracMethod}]
It is sufficient to show that the assumption $\log d>B_2$ leads to a contradiction. In view 
of foregoing lemmas we are left to examine two cases: 
\begin{itemize}
	\item $\alpha_1=\alpha_5$ and $\beta_1=\beta_6$ are minimal,
	\item $\alpha_1=\alpha_6$ and $\beta_1=\beta_5$ are minimal.
\end{itemize}
By switching the role of $p$ and $q$, if necessary, there is no restriction in assuming that $\alpha_1=\alpha_5$ and $\beta_1=\beta_6$ are minimal.
Now we intend to determine the minimal pair among $\alpha_2,\alpha_3,\alpha_4,\alpha_5$, therefore we consider the equation
$$ac\cdot bd=(p^{\alpha_2}q^{\beta_2}-1)(p^{\alpha_5}q^{\beta_5}-1)=(p^{\alpha_3}q^{\beta_3}-1)(p^{\alpha_4}q^{\beta_4}-1)=ad\cdot bc.$$
It is easy to see that only $\alpha_1=\alpha_4=\alpha_5$ or $\alpha_1=\alpha_5>\alpha_3=\alpha_4$ are possible. Indeed, Lemma~\ref{lem:div} exludes all the other cases.

First assume that $\alpha_1=\alpha_5>\alpha_3=\alpha_4$, which yields $\alpha_3,\alpha_5\leq B_1/\log p$. Thus 
$\beta_3,\beta_5\geq B_1/\log q$, otherwise $\log d <B_2$ follows. Now take the equation 
$$ac\cdot bd=(p^{\alpha_2}q^{\beta_2}-1)(p^{\alpha_5}q^{\beta_5}-1)=(p^{\alpha_3}q^{\beta_3}-1)(p^{\alpha_4}q^{\beta_4}-1)=ad\cdot bc$$
to see that neither $\beta_3$ nor $\beta_5$ is minimal among $\beta_2,\beta_3,\beta_4,\beta_5$. Therefore the minimal pair is $\beta_2=\beta_4$, but this 
contradicts Lemma~\ref{lem:div} by the triple $(a,b,c)$.

Turning to the case $\alpha_1=\alpha_4=\alpha_5<\alpha_2,\alpha_3$, note that $\beta_5>B_1/\log q$ holds because of  
$\alpha_5<B_1/\log q$ and $\log (bd+1)>\log d>B_2>2B_1$. In view of the equation 
$$ac\cdot bd=(p^{\alpha_2}q^{\beta_2}-1)(p^{\alpha_5}q^{\beta_5}-1)=(p^{\alpha_3}q^{\beta_3}-1)(p^{\alpha_4}q^{\beta_4}-1) =ad\cdot bc$$
we are looking for the minimal pair among $\beta_2,\beta_3,\beta_4,\beta_5$. Since $\beta_5$ is ``large'' and evidently not minimal, together with the conditions 
$\alpha_1=\alpha_4=\alpha_5<\alpha_2,\alpha_3$  the relation $\beta_2=\beta_3<\beta_4,\beta_5$ is the only possibility that does not contradict 
Lemma~\ref{lem:div} or the assumption 
$a<b<c<d$. Thus $\beta_3,\beta_6<B_1/\log q$.
Now we consider the equation $ab\cdot cd=ad\cdot bc$. After cancelling equal factors we obtain
\[p^{\alpha_6}q^{\beta_1}-1-p^{\alpha_6-\alpha_1}=p^{\alpha_3}q^{\beta_3+\beta_4-\beta_1}-p^{\alpha_3-\alpha_1}q^{\beta_3-\beta_1}-q^{\beta_4-\beta_1}.\]
Therefore $p^z|1-q^{\beta_4-\beta_1}$ follows, where $z=\min\{\alpha_6-\alpha_1,\alpha_3-\alpha_1\}$.
Here Lemma~\ref{lem:lb1} yields $p^{z-u_p}\leq \beta_4-\beta_1\leq 2B_1$, and we get
\[\min\{\alpha_6-\alpha_1,\alpha_3-\alpha_1\}\leq \frac{B_1}{\log p}+u_p+\frac{\log\left(\frac{B_1}{\log q}\right)}{\log p}.\]
But this bound, together with the bound for $\beta_3$ and $\beta_6$ yields the contradiction
\[B_2<\log d<\min\{\log(ad+1),\log(cd+1)\}< B_2.\]
\end{proof}

\section{The case of fixed $p$ and $q$}\label{Sec:Alg}

In view of the results and assumptions of the previous section we suppose that $a<b<c<d$ holds. Elaborating with the lower bounds for linear forms in 
logarithms of algebraic numbers due to Matveev~\cite{Matveev:2000} and Laurent et.~al.~\cite{Laurent:1995}, one can
obtain effectively computable upper bounds on $\log d$ in terms of $p$ and $q$ (see Stewart and Tijdeman~\cite{Stewart:1997}, or Szalay and Ziegler 
\cite{Szalay:2013}). If we look at the proof of~\cite[Lemma 7]{Szalay:2013} the authors concluded the three inequalities
\begin{eqnarray*}
\log\left(\frac cb \cdot \frac{bd+1}{cd+1}\right)
&\leq&\log\left(1+\frac 1{2d}\right)<\frac 1d, \\
\log\frac{(bd+1)(ac+1)}{(cd+1)ab}&<&\log\left(1+\frac2{ac}\right)<\frac 4c, \\
\log\frac{(ab+1)(cd+1)}{(ac+1)(bd+1)}&<&\log\left(1+\frac1{ab}\right)<\frac 2{ab}.
\end{eqnarray*}
Then they obtained 
\begin{eqnarray*}
1.690182\cdot 10^{10} \log c \log p \log q \left(2.1+\log\left(\frac{\log d}{\log c}\right)\right)&>&\log d,\\
1.690182\cdot 10^{10} \log (ab) \log p \log q \left(2.8+\log\left(\frac{\log d}{\log (ab)}\right)\right)&>& \log c-\log 4, \\
24.34 \log p \log q \left(2.08+\log\left(\frac{\log d}{\log p \log q}\right)\right)^2&>& \log(ab)-\log 2.
\end{eqnarray*}
According to Lemma~\ref{lem:low_abc}, we have 
\[\log(ab)-\log 2\geq \log(ab)\left(1-{\log 2}/{\log 3}\right)>0.369\log(ab)\]
and
\[\log(c)-\log 4\geq \log(c)\left(1-{\log 4}/{\log 5}\right)>0.138\log c.\]
Combining the two inequalities above it leads to
\begin{equation}\label{eq:lowbound_logd}
\begin{split}
 1.36\cdot10^{23} &(\log p \log q)^3(1.63+\log\log d)(2.71 +\log\log d)\\
 &\times (2.08-\log(\log p\log q)+\log\log d)^2>\log d.
\end{split} 
\end{equation} 

Now we restrict ourselves to determine all $\{p,q\}$-Diophantine quadruples in the case $(p,q)=(2,3)$. In particular, we prove the following

\begin{theorem}\label{Th:23tupel}
 There are no $\{2,3\}$-Diophantine quadruples.
\end{theorem}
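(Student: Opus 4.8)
The plan is to combine the general lower bound \eqref{eq:lowbound_logd}, read as an upper bound on $\log d$, with the continued fraction reduction of Lemma~\ref{lem:CFracMethod}, and then to dispose of the (short) finite list of survivors by a direct search. Substituting $p=2$, $q=3$ into \eqref{eq:lowbound_logd} yields an effective but enormous bound, of size $\log d<10^{31}$ say, and I take this number as the initial admissible bound $B$ in Lemma~\ref{lem:CFracMethod}.

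For the reduction I would tabulate the convergents $P/Q$ of
\[\frac{\log 3}{\log 2}=[1;1,1,2,2,3,1,5,2,23,\dots]\]
and, for the current admissible bound $B$, put
\[\delta=\min\Bigl\{\,|P\log 2-Q\log 3|\ :\ P/Q\text{ a convergent},\ Q<\tfrac{2B}{\log 3},\ P<\tfrac{2B}{\log 2}\,\Bigr\},\]
a finite minimum. Then the hypothesis of Lemma~\ref{lem:CFracMethod} is met with this $\delta$, and the constants occurring there are $u_2=3$ and $u_3=1$, coming from $2^{u_2}\|3^{2}-1=8$ and $3^{u_3}\|2^{2}-1=3$ (the order $\ord_2(3)$ being read in the ``$\bmod\ 4$'' sense of Lemma~\ref{lem:lb1}). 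Lemma~\ref{lem:CFracMethod} then replaces $B$ by
\[B_2=2B_1+\log 3+3\log 2+\log\!\left(\frac{2B_1^{2}}{\log 2\,\log 3}\right),\qquad B_1=\max\left\{\log\frac{2}{\delta},\ \log\frac{8B}{\log 2\,\log 3}\right\}.\]
Starting from $B\approx 10^{31}$ this collapses to $\log d<B_2$ of size about $160$ already after one pass, and iterating the reduction two or three more times (each step lowering $B$) it stabilizes at $\log d<18$, equivalently $d<e^{18}$, with $B_1<5.3$. I would record this as the output of the analytic part.

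The terminal search is now finite and in fact short. By Lemma~\ref{lem:alpha12_beta12_small} and the corollary following it, $ab+1,\,ac+1<e^{B_1}$ and $bc+1<e^{2B_1}$; since $a<b$ this forces $a^{2}<e^{B_1}$, so $a\le 13$, and $a$ is odd by Lemma~\ref{lem:odd}. I would then run over all odd $a\le 13$, all odd $b>a$ with $ab+1$ a $\{2,3\}$-unit $<e^{B_1}$, and all odd $c>b$ with both $ac+1<e^{B_1}$ and $bc+1<e^{2B_1}$ being $\{2,3\}$-units; this yields the complete, small list of candidate $S$-Diophantine triples $(a,b,c)$. For each such triple I would search for $d>c$ with $d<e^{18}$ and $ad+1$, $bd+1$, $cd+1$ all $\{2,3\}$-units --- for instance by writing $bd+1=2^{\alpha_5}3^{\beta_5}$ with $\alpha_5,\beta_5$ bounded (since $\log(bd+1)<2\log d<36$), solving $d=(2^{\alpha_5}3^{\beta_5}-1)/b$, and testing the two remaining equations. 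Lemma~\ref{lem:low_abc} discards $ab\le 2$ and $c\le 4$ at the outset, and since $3\equiv 3\pmod{4}$ one may additionally invoke Lemma~\ref{lem:System}: $ac+1$ and $bd+1$ must be pure powers of $2$, and $ab+1=2\cdot 3^{\beta_1}$, $cd+1=2\cdot 3^{\beta_6}$, which trims the case analysis considerably. The assertion is that no admissible $d$ ever turns up, and the theorem follows.

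The analytic steps are routine once the convergents of $\log 3/\log 2$ are in hand; I expect the real care to go into making the terminal search provably exhaustive --- over all orderings and residue classes modulo $4$, over all divisor factorizations of the numbers $s_i-1$, and over the full range of $d$, with no case omitted. One structural point deserves emphasis: $q=3$ is precisely the value where the Catalan-type shortcut behind Theorem~\ref{T1} breaks down, because $2^{3}-3^{2}=-1$; the continued fraction method is immune to this, since in the proof of Lemma~\ref{lem:CFracMethod} the divisibility input supplied by Lemma~\ref{lem:lb1} always has the form $2^{z}\mid 3^{m}-1$, never $2^{z}\mid 3^{m}+1$, so no exceptional Catalan solution can interfere.
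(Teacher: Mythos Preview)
Your proposal follows essentially the same route as the paper: feed $p=2$, $q=3$ into \eqref{eq:lowbound_logd} for an astronomical initial bound, iterate Lemma~\ref{lem:CFracMethod} until the bound on $\log d$ stabilizes, read off bounds on $ab+1$ and $ac+1$ from Lemma~\ref{lem:alpha12_beta12_small}, and finish with a direct finite search for triples and then for the fourth element $d$. The paper records $\log d<20.34$ after three reductions (and $\alpha_1,\alpha_2\le 9$, $\beta_1,\beta_2\le 6$), enumerates the $4900$ pairs $(ab+1,ac+1)$, finds the five genuine triples $(1,5,7),(1,3,5),(1,7,23),(1,15,17),(1,31,47)$, and checks $344$ candidates for $d$; your stabilized value $\log d<18$ with $B_1<5.3$ is slightly sharper than the paper's reported figure but the discrepancy is purely numerical and does not change the search outcome, since all five actual triples satisfy $ab+1,ac+1<200$ anyway. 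Your extra remark about invoking Lemma~\ref{lem:System} (legitimate here since $3\equiv 3\pmod 4$) and your closing observation about the $2^z\mid 3^m-1$ shape of the divisibility input are correct side comments the paper does not make, but they are embellishments rather than a different method.
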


\begin{proof}
Insert $p=2$ and $q=3$ in \eqref{eq:lowbound_logd} to get $\log d<1.6\cdot 10^{30}$. Then Lemma~\ref{lem:CFracMethod} 
provides $\log d<158.812$. Applying Lemma~\ref{lem:CFracMethod} again we obtain $\log d<21.966$, and then one more application gives $\log d<20.34$. Note that a 
further attempt to apply Lemma~\ref{lem:CFracMethod} does not yield a significant improvement (i.e.~the reduction of the upper bound is less than 0.1). 
Together with 
Lemma~\ref{lem:alpha12_beta12_small} we obtain $\alpha_1,\alpha_2\leq 9$ and $\beta_1,\beta_2\leq 6$. Moreover, the bound for $\log d$ leads to 
$\alpha_3,\alpha_4,\alpha_5,\alpha_6\leq 29$ and $\beta_3,\beta_4,\beta_5,\beta_6\leq 18$. All together it means $4900$ possibile pairs 
$(ab+1,ac+1)$. Since $a|\gcd(p^{\alpha_1}q^{\beta_1}-1,p^{\alpha_2}q^{\beta_2}-1)$ we can easily compute a list of possible $a$'s and corresponding $b$'s and 
$c$'s. By a computer search we obtain $2482$ triples with $a<b<c$. However, it is easy (for a computer) to check that only five of them are 
really $\{2,3\}$-Diophantine, 
namely the triples $(1, 5, 7), (1, 3, 5), (1, 7, 23), (1, 15, 17)$ and $(1, 31, 47)$. From $d=(p^{\alpha_6}q^{\beta_6}-1)/c$ with 
$0\leq \alpha_6\leq 29$ and $0\leq \beta_6\leq 18$ we obtain $344$ candidates to be $\{2,3\}$-Diophantine quadruples. But again a computer search 
shows that no candidate fulfills the conditions to be a $\{2,3\}$-Diophantine quadruple. 
\end{proof}

We implemented the ideas presented in the proof of Theorem~\ref{Th:23tupel} in Sage~\cite{sage} to find  all 
$S$-Diophantine quadruples if the set $S=\{p,q\}$ is given. 
Then we ran a computer search on all sets $S=\{2,q\}$ with $7\leq q <10^9$, $q$ prime and $q\equiv 1 \; (\bmod \,4)$, but no 
quadruples were found. This computer search, together with Theorem~\ref{T1} proves the first part of Theorem~\ref{Th:pq_small}. The computation was distributed 
on several kernels and computers. However, the total CPU time was about $121$ days and $16$ hours. Let us note that the proof of Theorem~\ref{T1} does not 
depend on this result.  


Similarly, we ran a computer verification on all sets $S=\{p,q\}$ of odd primes $p$ and $q$ with $1<p,q<10^5$ such that $p\equiv q\equiv 3\; (\bmod \,4)$ does not hold. 
Even in this case we found no $S$-Diophantine quadruple, which proves the second part of Theorem~\ref{Th:pq_small}. Note that in a previous 
paper~\cite{Szalay:2013a} the authors proved that no $S$-Diophantine quadruples exist provided $p\equiv q\equiv 3\; (\bmod \,4)$. This computer search 
took in total approximately $27$ hours CPU time.


\section{Proof of Theorem~\ref{T1}}\label{Sec:Proofb}

In the proof of Theorem~\ref{T1} we are mainly concerned with System~\eqref{sys3b} (cf. Lemma~\ref{lem:System}). In order to get this specific symmetric 
form of the system we do not assume $a<b<c<d$ any longer. In view of Lemma~\ref{lem:System} we have a closer look at 
System~\eqref{sys3b} and assume that $\alpha_i\ge2$ and $\beta_1,\beta_6\ge1$. Moreover, suppose that $\alpha_5\ge\alpha_2$ and $\beta_6\ge\beta_1$. 
Therefore only the following relations are possible (see Table~\ref{Tab:Cases}). 

\begin{table}[ht]
\caption{List of cases}\label{Tab:Cases}
\begin{tabular}{|c|c|}
\hline $\alpha$ & $\beta$ \\\hline\hline
\multirow{3}*{$2\le\alpha_2=\alpha_3\leq \alpha_4$} & $1\le\beta_1=\beta_3\leq \beta_4$ \\\cline{2-2}
 & $1\le\beta_1=\beta_4\leq \beta_3$ \\\cline{2-2}
 & $0\le\beta_3=\beta_4<\beta_1$ \\\hline
 \multirow{3}*{$2\le\alpha_2=\alpha_4\leq \alpha_3$} & $1\le\beta_1=\beta_3\leq \beta_4$ \\\cline{2-2}
 & $1\le\beta_1=\beta_4\leq \beta_3$ \\\cline{2-2}
 & $0\le\beta_3=\beta_4<\beta_1$ \\\hline
 \multirow{3}*{$2\le\alpha_3=\alpha_4 < \alpha_2$} & $1\le\beta_1=\beta_3\leq \beta_4$ \\\cline{2-2}
 & $1\le\beta_1=\beta_4\leq \beta_3$ \\\cline{2-2}
 & $0\le\beta_3=\beta_4<\beta_1$ \\\hline
\end{tabular}
\end{table}

First, we show that only one of $\beta_1$ and $\alpha_2$ can be minimal.

\begin{lemma}
Assume that $\beta_1\leq \beta_i$ and $\alpha_2\leq \alpha_i$ are valid with $i=3,4,6$. Then system~\eqref{sys3b} possesses no solution.
\end{lemma}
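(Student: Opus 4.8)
The plan is to exploit the strong divisibility structure that the simultaneous minimality of $\alpha_2$ and $\beta_1$ imposes on the single number $bc+1$. From $\alpha_2\le\alpha_4$ the pure two-power $ac+1=2^{\alpha_2}$ divides $bc+1=2^{\alpha_4}q^{\beta_4}$, and from $\beta_1\le\beta_4$ together with $\alpha_4\ge1$ the number $ab+1=2q^{\beta_1}$ divides $bc+1$ as well. Thus both $ac+1$ and $ab+1$ divide $bc+1$, and the whole contradiction is squeezed out of this single fact; of the lemma's hypotheses only $\alpha_2\le\alpha_4$ and $\beta_1\le\beta_4$ will actually be used.

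First I would extract information from $ac+1\mid bc+1$. A positive divisor does not exceed its multiple, so $ac+1\le bc+1$, whence $a<b$ since the entries of the quadruple are distinct. As $bc+1-(ac+1)=(b-a)c$ and $\gcd(ac+1,c)=1$, we get $ac+1\mid b-a$, and $b-a>0$ then forces $b\ge ac+a+1=a(c+1)+1\ge c+2$, i.e.\ $b>c$. Repeating the argument with $ab+1\mid bc+1$: first $ab+1\le bc+1$, hence $a<c$; then $bc+1-(ab+1)=(c-a)b$ and $\gcd(ab+1,b)=1$ give $ab+1\mid c-a$, and $c-a>0$ yields $c\ge ab+a+1=a(b+1)+1\ge b+2$, i.e.\ $c>b$. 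Since $b>c$ and $c>b$ cannot both hold, \eqref{sys3b} has no solution. (Alternatively, having obtained the chain $a<c<b$ one may finish by Lemma~\ref{lem:div} applied to the $S$-Diophantine triple $(a,c,b)$, which asserts exactly $ab+1\nmid bc+1$, contradicting what was derived.)

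I do not expect a genuine obstacle here; the only care needed is bookkeeping. One must check that the hypothesised exponent inequalities really do make \emph{both} $ac+1$ and $ab+1$ divide $bc+1$ — here the specific shape of \eqref{sys3b} ($\alpha_1=\alpha_6=1$, $\beta_2=\beta_5=0$, and $\alpha_i\ge2$ for $2\le i\le5$) is what is being used — and one should note that the bounds $b\ge c+2$ and $c\ge b+2$ hold uniformly because $a\ge1$, so that no case split on the size of $a$ (in particular, no separate treatment of the apparently delicate case $a=1$) is required.
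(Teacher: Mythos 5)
Your proposal is correct and follows essentially the same route as the paper: both arguments extract from $\alpha_2\le\alpha_4$ and $\beta_1\le\beta_4$ (only the $i=4$ case of the hypotheses) the two divisibility relations $ac+1\mid bc+1$ and $ab+1\mid bc+1$, which the paper then declares incompatible by Lemma~\ref{lem:div}. The only difference is that you re-derive this incompatibility from scratch (via $ac+1\mid b-a$ forcing $b>c$, and $ab+1\mid c-a$ forcing $c>b$), which is a sound, self-contained substitute for the citation and has the minor advantage of not needing the ordering $a<b<c$ to be established first.
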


\begin{proof}
Since $\beta_1\leq \beta_4$ and $\alpha_2\leq \alpha_4$, we see $a,b<c$. Moreover, we deduce that $ab+1|bc+1$ and $ac+1|bc+1$. But in view of 
Lemma~\ref{lem:div} both divisibility relations cannot hold.
\end{proof}

Therefore five sub-cases remain to examine.

\subsection{The case $\alpha_2=\alpha_3\leq \alpha_4$ and $\beta_3=\beta_4<\beta_1$}
Using the notation $\beta=\beta_3=\beta_4$ and $\alpha=\alpha_2=\alpha_3$, we obtain
\begin{align*}
ab+1=&\;2q^{\beta_1},& bc+1=&\;2^{\alpha_4}q^{\beta},\\
ac+1=&\;2^{\alpha},& bd+1=&\;2^{\alpha_5}, \\
ad+1=&\;2^{\alpha}q^{\beta},& cd+1=&\; 2q^{\beta_6}.
\end{align*}

The relation $ac+1<bc+1$ implies $a<b$. Similarly, $ac+1<ad+1$ entails $c<d$. Consequently, $\alpha_4<\alpha_5$. Moreover, considering the triple $(a,b,c)$ we 
have $c<b$, otherwise a contradiction to Lemma~\ref{lem:div} would appear. 

The equation
\[ad\cdot bc=\left(2^{\alpha}q^{\beta}-1\right)\left(2^{\alpha_4}q^{\beta}-1\right)=
\left(2^{\alpha}-1\right)\left(2^{\alpha_5}-1\right)=ac\cdot bd\]
modulo $2^{\alpha_4}$ yields
\begin{equation}\label{Eq:Mod} 
q^{\beta}\equiv 1 \;\;\;(\bmod \,2^{\alpha_4-\alpha}),
\end{equation}
and then $2^{\alpha_4-\alpha}|q^\beta-1$ follows. Thus $2^{\alpha_4-\alpha}\leq q^\beta-1$  holds.
At this point we separate the cases $\beta_1\geq 2\beta$ and $\beta_1< 2\beta$. 

First, assume that $\beta_1\geq 2\beta$. Taking
\[ad\cdot bc=\left(2^{\alpha}q^{\beta}-1\right)\left(2^{\alpha_4}q^{\beta}-1\right)=
\left(2q^{\beta_6}-1\right)\left(2q^{\beta_1}-1\right)=ab\cdot cd\]
modulo $q^{2\beta}$, we have
\[q^\beta\left(2^{\alpha_4}+2^\alpha\right)\equiv 0\;\;\;(\bmod\; q^{2\beta}).\]
Thus $q^\beta|2^{\alpha_4-\alpha}+1$ and $2^{\alpha_4-\alpha}\geq q^\beta-1$. Together with~\eqref{Eq:Mod}, we have 
$2^{\alpha_4-\alpha}=q^\beta-1$. By Lemma~\ref{lem:Catalan}, $\beta\leq 1$ holds since we excluded the case $q=3$ by Theorem~\ref{Th:23tupel}. The case 
$\beta=0$ is impossible, otherwise we have $2^{\alpha_4-\alpha} =0$. Therefore  $2^{\alpha_4-\alpha} =q-1$, and we deduce $\alpha_4-\alpha=1$ 
because $q\equiv 3 \; (\bmod 4)$. Hence we have $q=3$ which is excluded by Theorem~\ref{Th:23tupel}.

We turn now to the case $2\beta>\beta_1$. The equation
\[ad\cdot bc=\left(2^{\alpha}q^{\beta}-1\right)\left(2^{\alpha_4}q^{\beta}-1\right)=\left(2q^{\beta_1}-1\right)\left(2q^{\beta_6}-1\right)=ab\cdot cd\]
modulo $q^{\beta_1}$ leads to
\[q^\beta\left(2^{\alpha_4}+2^\alpha\right)\equiv 0\;\;\; (\bmod\, q^{\beta_1}).\]
Thus $q^{\beta_1-\beta}|2^{\alpha_4-\alpha}+1$ and $wq^{\beta_1-\beta}=2^{\alpha_4-\alpha}+1$ is valid with a suitable positive integer $w$.
A simple calculation shows that
\[
\frac 
ba=\frac{bc}{ac}=\frac{2^{\alpha_4}q^\beta-1}{2^\alpha-1}=2^{\alpha_4-\alpha}q^{\beta}+\frac{2^{\alpha_4-\alpha}q^\beta-1}{2^\alpha-1}>2^{\alpha_4-\alpha}q^{
\beta}.\]

If $w\ge3$, then $3q^{\beta_1-\beta}\leq 2^{\alpha_4-\alpha}+1$, and we obtain
\[\frac ba>2^{\alpha_4-\alpha}q^\beta\ge3q^{\beta_1}-q^\beta>3q^{\beta_1}-\frac 1q\, q^{\beta_1}\ge\frac {20}7 \,q^{\beta_1}. \]
But this is impossible since
\[2q^{\beta_1}=ab+1>b\geq \frac ba >\frac{20}7 \,q^{\beta_1}>2q^{\beta_1}.\] 

Suppose that $w=2$, that is $2q^{\beta_1-\beta}=2^{\alpha_4-\alpha}+1$. Clearly, $2^{\alpha_4-\alpha}+1$ is even if and only if $\alpha_4=\alpha$, hence  
$2q^{\beta_1-\beta}=2$ and $\beta_1=\beta$. Now $ab+1=2q^\beta$, $ac+1=2^\alpha$ and $bc+1=2^\alpha q^\beta$, which yield the equation
\begin{equation}\label{szer}
(ab+1)(ac+1)=2(bc+1).
\end{equation}
Only $a=1$ is possible, otherwise the left hand side of~\eqref{szer} is larger than the right hand side. But $a=1$ also leads to a contradiction since 
\[b\cdot c=2^{\alpha+1}q^\beta-2^\alpha-q^\beta+1=2^\alpha q^\beta-1=bc\]
gives $2^{\alpha}q^\beta=2^\alpha+q^\beta-2$, hence $2|q^\beta$.

Only the relation $q^{\beta_1-\beta}= 2^{\alpha_4-\alpha}+1$ remains to examine. Since we may exclude the case $q=3$ by Theorem~\ref{Th:23tupel}, 
Lemma~\ref{lem:Catalan} implies $\beta_1-\beta=1$. By the assumption $q\equiv 3\; (\bmod\, 4)$ we have $\alpha_4-\alpha=1$ and $q=3$, a contradiction. 

\subsection{The case $\alpha_2=\alpha_4\leq \alpha_3$ and $\beta_3=\beta_4<\beta_1$}
Denote by $\beta$ the value of $\beta_3=\beta_4$ and by $\alpha$ the integer $\alpha_2=\alpha_4$. 
We study the system
\begin{align*}
ab+1=&\;2q^{\beta_1},& bc+1=&\;2^{\alpha}q^{\beta},\\
ac+1=&\;2^{\alpha},& bd+1=&\;2^{\alpha_5}, \\
ad+1=&\;2^{\alpha_3}q^{\beta},& cd+1=&\;2q^{\beta_6}. 
\end{align*}
Comparing $ac+1$ and $bc+1$ we obtain $a<b$, hence $\alpha_3<\alpha_5$. Considering the equation
\[ad\cdot bc=\left(2^{\alpha_3}q^{\beta}-1\right)\left(2^{\alpha}q^{\beta}-1\right)=
\left(2^{\alpha}-1\right)\left(2^{\alpha_5}-1\right)=ac\cdot bd\]
modulo $2^{\alpha_3}$ it provides
\[2^{\alpha}q^{\beta}-1\equiv 2^{\alpha}-1 \;\;\;(\bmod 2^{\alpha_3}),\]
i.e. $q^{\beta}\equiv 1 \;\;\;(\bmod\, 2^{\alpha_3-\alpha})$.
Therefore  $2^{\alpha_3-\alpha}|q^{\beta}-1$ holds. We also have $c|c(b-a)=2^{\alpha}(q^{\beta}-1)$. Thus
$c|q^{\beta}-1$ and $c<q^\beta$. Hence $2^\alpha q^\beta=bc+1<bq^\beta+1$ yields $2^\alpha\leq b$.
Since $c$ is odd we obtain $c|\frac{q^{\beta}-1}{2^{\alpha_3-\alpha}}$. Therefore
$$
2^\alpha q^\beta=bc+1\le b\,\frac{q^\beta-1}{2^{\alpha_3-\alpha}}+1<b\,\frac{q^\beta}{2^{\alpha_3-\alpha}}+q^\beta
$$
implies $b>2^{\alpha_3}-2^{\alpha_3-\alpha}$.

On the other hand, $b|b(a-c)=q^\beta(2q^{\beta_1-\beta}-2^{\alpha})$. Hence
\begin{equation}\label{divisio}
b\mid q^{\beta_1-\beta}-2^{\alpha-1}.
\end{equation}
Observe that $q^{\beta_1-\beta}-2^{\alpha-1}$ is non-zero. The inequality $b\geq 2^{\alpha}$, together with~\eqref{divisio} provides 
$q^{\beta_1-\beta}>2^{\alpha-1}$, therefore $b\le q^{\beta_1-\beta}-2^{\alpha-1}$. Further, $c<a$ comes from the comparison of 
$ab+1$ and $bc+1$.

By $2q^{\beta_1}=ab+1\leq a(q^{\beta_1-\beta}-2^{\alpha-1})+1$ 
we deduce that
\[a\geq \frac{2q^{\beta_1}-1}{q^{\beta_1-\beta}-2^{\alpha-1}}.\]

Assuming $d>b$ we have
\begin{equation*}
2^{\alpha_3}q^\beta=\;ad+1>ab> (2^{\alpha_3}-2^{\alpha_3-\alpha})\frac{2q^{\beta_1}-1}{q^{\beta_1-\beta}-2^{\alpha-1}}=
2^{\alpha_3}q^\beta Q,
\end{equation*}
where
$$
Q=\left(1-\frac 
{1}{2^\alpha}\right)\frac{2q^{\beta_1}-1}{q^{\beta_1}-2^{\alpha-1}q^\beta}>\frac 34 \cdot\frac{2q^{\beta_1}-1}{q^{\beta_1}-1/2}=\frac 32>1
$$
means a contradiction. Recall that we assume $\alpha\ge2$. Thus $d<b$. But, together with $c<a$, this leads to $cd+1=2q^{\beta_6}<ab+1=2q^{\beta_1}$, 
which contradicts the assumption $\beta_1\leq \beta_6$.

\subsection{The case $\alpha_3=\alpha_4 < \alpha_2$ and $\beta_1=\beta_3\leq \beta_4$}
Let $\alpha=\alpha_3=\alpha_4$ and $\beta= \beta_1=\beta_3$.
Then we have to consider the system
\begin{align}\label{sys7}
ab+1=&\;2q^{\beta},& bc+1=&\;2^{\alpha}q^{\beta_4},\nonumber\\
ac+1=&\;2^{\alpha_2},& bd+1=&\;2^{\alpha_5}, \\
ad+1=&\;2^{\alpha}q^{\beta},& cd+1=&\;2q^{\beta_6} \nonumber
\end{align}
with $2\leq \alpha<\alpha_2\le\alpha_5$ and $1\le \beta\le\beta_4$. Due to \eqref{sys7}, $ab+1<ad+1$ holds. Thus $b<d$. This entails 
$\beta_4<\beta_6$ via $cb<cd$. Moreover, $ab+1<bc+1$ shows $a<c$.
The equation
$$
ad\cdot bc=(2^\alpha q^\beta-1)(2^\alpha q^{\beta_4}-1)=(2q^\beta-1)(2q^{\beta_6}-1)=ab\cdot cd
$$
modulo $q^{\beta_4}$ admits 
\[2^{\alpha-1}\equiv1 \;\;\;(\bmod \,q^{\beta_4-\beta}).\]
Thus $q^{\beta_4-\beta}\mid2^{\alpha-1}-1$ and $q^{\beta_4-\beta}\le2^{\alpha-1}-1$. 
We also consider the equation
\begin{equation}\label{csut1}
ad\cdot bc=(2^{\alpha} q^\beta-1)(2^\alpha q^{\beta_4}-1)=(2^{\alpha_2}-1)(2^{\alpha_5}-1)=ac\cdot bd,
\end{equation}
and distinguish two cases. First, suppose that $2\alpha\le\alpha_2$. Then~\eqref{csut1} modulo $2^{2\alpha}$ yields
$2^{\alpha}\mid q^{\beta_4-\beta}+1$. Thus $2^\alpha\le q^{\beta_4-\beta}+1\le 2^{\alpha-1}$ is a contradiction.

Now assume that $\alpha_2 < 2\alpha$. Then equation~\eqref{csut1} modulo $2^{\alpha_2}$ provides $2^{\alpha_2-\alpha}\mid q^{\beta_4-\beta}+1$, i.e. 
$w2^{\alpha_2-\alpha}=q^{\beta_4-\beta}+1$. We also note that
\begin{equation}\label{eq:ca}
\frac ca =\frac{bc}{ab}=\frac{2^\alpha 
q^{\beta_4}-1}{2q^\beta-1}=2^{\alpha-1}q^{\beta_4-\beta}+\frac{2^{\alpha-1}q^{\beta_4-\beta}-1}{2q^\beta-1}>2^{\alpha-1}q^{\beta_4-\beta}.
\end{equation}
If $w\geq 3$, then \eqref{eq:ca}, together with $c<2^{\alpha_2}$ (note that $ac+1=2^{\alpha_2}$) gives
\begin{eqnarray*}
\frac ca &>&2^{\alpha-1}q^{\beta_4-\beta}\ge2^{\alpha-1}(3\cdot2^{\alpha_2-\alpha}-1)=3\cdot2^{\alpha_2-1}-2^{\alpha-1} \\
&>&3\cdot2^{\alpha_2-1}-2^{\alpha_2-2}>\frac 54 c.
\end{eqnarray*}
Therefore we may assume that $w=1,2$.

First let $w=1$. This implies $2^{\alpha_2-\alpha}=q^{\beta_4-\beta}+1$. By Lemma \ref{lem:Catalan} we have 
$\beta_4-\beta\leq 1$. If $\beta_4=\beta$, then $\alpha_2-\alpha=1$ and by \eqref{sys7} we get 
\[(ab+1)(ac+1)=2q^\beta \cdot 2^{\alpha+1}= 4q^{\beta_4}2^{\alpha}=4(bc+1),\]
which provides $a=1$. Then 
$$
bc=(2q^\beta-1)(2^{\alpha+1}-1)=2^\alpha q^\beta-1,
$$
consequently $3\cdot 2^{\alpha}q^\beta+2=2^{\alpha+1}+2q^\beta$. Recalling $\alpha\geq 2$ and $\beta\geq 1$ we get
\[0=q^\beta2^{\alpha+1}+q^\beta(2^\alpha-2)-2^{\alpha+1}+2>0.\]

In case of $\beta_4-\beta=1$ we have $2^{\alpha_2-\alpha}=q+1$, moreover
$$
(ab+1)(ac+1)=\frac{2(q+1)}{q}(bc+1).
$$
By $2(q+1)/q\le 16/7$, this equality is possible only if $a=1$, i.e.~$q(b+1)(c+1)=2(q+1)(bc+1)$. After a few steps
$$
\left(b-\frac{q}{q+2}\right)\left(c-\frac{q}{q+2}\right)+\left(1-\frac{q^2}{(q+2)^2}\right)=0.
$$
follows. Obviously, on the left hand side all the three terms are positive, therefore we arrive at a contradiction.

The case $w=2$ remains providing $2^{\alpha_2-\alpha+1}=q^{\beta_4-\beta}+1$. Since $\alpha_2>\alpha$, by Lemma~\ref{lem:Catalan} we 
have $\beta_4-\beta=1$. Thus $2^{\alpha_2-\alpha+1}=q+1$, therefore
$$
(ab+1)(ac+1)=\frac{(q+1)}{q}(bc+1) 
$$
implies $a=1$ and $q(b+1)(c+1)=(q+1)(bc+1)$ or equivalently $bc+1=q(b+c)$. Replacing the integers $b$ and $c$ by the corresponding values on the right hand 
side of $bc+1=q(b+c)$ we find
$$
2^\alpha q^{\beta+1}=q\left(2q^\beta+2^{\alpha-1}(q+1)-2\right),
$$
and then
$$
2^{\alpha-2}=\frac{q^\beta-1}{2q^\beta-q-1}\in\mathbb{Z}.
$$
Obviously we have $\beta=1$ and $\alpha=2$. But, this yields $b=2q-1$ and $d=4q-1=2b+1$. Moreover, we have $bc=4q^2-1$ hence $c=2q+1=b+2$ and
\[2q^{\beta_6}=cd+1=(b+2)(2b+1)+1=(b+1)(2b+3).\]
Since $b+1$ and $2b+3$ are co-prime either $b+1|2$ or $2b+3|2$ holds. Thus $a=b=1$ or $b=-1$, both contradict our assumptions.

\subsection{The case $\alpha_3=\alpha_4 < \alpha_2$ and $\beta_1=\beta_4\leq \beta_3$}
Let $\alpha=\alpha_3=\alpha_4$ and $\beta= \beta_1=\beta_4$.
The corresponding situation can be described by
\begin{align*}
ab+1=&\;2q^{\beta},& bc+1=&\;2^{\alpha}q^{\beta},\\
ac+1=&\;2^{\alpha_2},& bd+1=&\;2^{\alpha_5}, \\
ad+1=&\;2^{\alpha}q^{\beta_3},& cd+1=&\;2q^{\beta_6}
\end{align*}
with the conditions $2\le\alpha\le\alpha_2\le\alpha_5$ and $1\le\beta\le\beta_3$.
We can easily deduce $b<d$ and $a<c$. Taking the equation 
$$ad\cdot bc=(2^{\alpha} q^\beta_3-1)(2^\alpha q^{\beta}-1)=(2^{\alpha_2}-1)(2^{\alpha_5}-1)=ab\cdot cd$$
modulo $q^{\beta_3}$, we see that
\[2^{\alpha-1}\equiv1\;\;\; (\bmod \,q^{\beta_3-\beta}).\]
That is $q^{\beta_3-\beta}\mid 2^{\alpha-1}-1$, further $q^{\beta_3-\beta}<2^{\alpha-1}$.

It is obvious that $b\mid b(c-a)=2^\alpha q^\beta-2q^\beta$, and then $b\mid2^{\alpha-1}-1$. Since $q$ and $b$ are co-prime we get 
$$
b\left| \frac{2^{\alpha-1}-1}{q^{\beta_3-\beta}}\right. .
$$
From 
$$2^\alpha q^\beta=bc+1\le \frac{2^{\alpha-1}-1}{q^{\beta_3-\beta}}c+1$$
we deduce
$c>2q^{\beta_3}-q^{\beta_3-\beta}$. 
Since 
$c\mid c(a-b)=2^\alpha(2^{\alpha_2-\alpha}-q^\beta)$,
we get
$c\mid (2^{\alpha_2-\alpha}-q^\beta)$. The assertion $c\le q^\beta$ leads to a contradiction via
\[bc+1\le (2^{\alpha-1}-1)q^\beta+1<2^\alpha q^\beta=bc+1.\]
Thus we have $c>q^\beta$. Therefore $2^{\alpha_2-\alpha}>q^\beta$ and $a>b$. Additionally, we have $2^{\alpha_2}=ac+1\le 
a(2^{\alpha_2-\alpha}-q^\beta)+1$, which implies
$$
a\ge\frac{2^{\alpha_2}-1}{2^{\alpha_2-\alpha}-q^\beta}.
$$

Now we show that neither $d>c$ nor $d<c$ is possible. Indeed, if $d>c$ then
$$
2^\alpha q^{\beta_3}=ad+1>ac\ge q^{\beta_3}\left(2-\frac{1}{q^\beta}\right)\frac{2^{\alpha_2}-1}{2^{\alpha_2-\alpha}-q^\beta}>2^\alpha q^{\beta_3},
$$ 
since $2-1/q^\beta>1$ and $({2^{\alpha_2}-1})/({2^{\alpha_2}-2^\alpha q^\beta})>1$. Assuming $d<c$, together with $b<a$ we get 
$2^{\alpha_5}=bd+1<ac+1=2^{\alpha_2}$, which contradicts our assumption $\alpha_2\le\alpha_5$.

\subsection{The case $\alpha_3=\alpha_4 < \alpha_2$ and $\beta_3=\beta_4<\beta_1$} Put $\alpha=\alpha_3=\alpha_4$ and $\beta=\beta_3=\beta_4$.
First suppose $c<a$. By $cd+1=2q^{\beta_6}\geq 2q^{\beta_1}=ab+1$ we deduce that $d>b$. Then
$ad+1=2^{\alpha}q^{\beta}=bc+1$ contradicts $c<a$ and $b<d$.
Now assume that $b<a$. We have $bd+1=2^{\alpha_5}\geq 2^{\alpha_2}=ac+1$, and then $d>c$. Thus we get
again a contradiction to $ad+1=2^{\alpha}q^{\beta}=bc+1$.

Therefore $a<b$ and $a<c$, further
\begin{align*}
 b|b(c-a)=&q^{\beta}(2^{\alpha}-2q^{\beta_1-\beta})>0, \quad \text{and}\\
 c|c(b-a)=&2^{\alpha}(q^{\beta}-2^{\alpha_2-\alpha})>0.
\end{align*}
Now $\gcd(b,q)=\gcd(c,2)=1$ implies $b<2^\alpha$ and $c< q^{\beta}$. Thus
$2^{\alpha}q^{\beta}>bc+1=2^{\alpha}q^{\beta}$, and the final contradiction is presented.

\section*{Acknowledgment}
The second author was supported by the Austrian Science Fund (FWF) under the project P 24801-N26.

\def\cprime{$'$}

\end{document}